\documentclass[
final
]{dmtcs-episciences}


\usepackage[utf8]{inputenc}
\usepackage{subfigure}

\usepackage{amsmath}

\def\pp{{\mathcal P}}
\def\ii{{\mathcal I}}
\def\jj{{\mathcal J}}

\def\w{{M}}
\def\wb{{B}}
\def\wa{{A}}
\def\PP{{\mathbb P}}
\def\QQ{{\mathbb Q}}

\def\TT{{\mathbb T}}
\def\ZZ{{\mathbb Z}}

%
\newtheorem{theorem}{Theorem}

\newtheorem{corollary}{Corollary}

\newtheorem{lemma}{Lemma}

\newtheorem{proposition}{Proposition}
\newtheorem{remark}{Remark}
\newtheorem{observation}{Observation}

%
\usepackage[round]{natbib}

\author{J. Lea\~nos\affiliationmark{1}
  \and Christophe Ndjatchi\affiliationmark{2}
  \and L. M. R\'ios-Castro\affiliationmark{3}}
\title[On the connectivity of the disjointness graph of segments]{On the connectivity of the disjointness graph of segments of point sets in general position in the plane}
\affiliation{
  Unidad Acad\'emica de Matem\'aticas, Universidad Aut\'onoma de Zacatecas, M\'exico.\\
  Academia de F\'isico-Matem\'aticas, Instituto Polit\'ecnico Nacional, UPIIZ, Zacatecas, M\'exico.\\
  Academia de F\'isico-Matem\'aticas, Instituto Polit\'ecnico Nacional, CECYT 18, Zacatecas, M\'exico.
  }
\keywords{Disjointness graph of segments, Connectivity, Menger's theorem, Crossings of segments}
\received{2020-07-31}
\revised{2021-08-25, 2021-12-13, 2022-04-06}
\accepted{2022-04-07}

\begin{document}
\publicationdetails{24}{2022}{1}{15}{6678}

\maketitle

\begin{abstract}
 Let \begin{math}P\end{math} be a set of  \begin{math}n\geq 3\end{math} points in general position in the plane. The \emph{edge disjointness graph} \begin{math}D(P)\end{math} of \begin{math}P\end{math} is the graph whose vertices are all the closed straight  line segments with endpoints in
\begin{math}P\end{math}, two of which are adjacent in \begin{math}D(P)\end{math} if and only if they are disjoint. We show that the connectivity of \begin{math}D(P)\end{math} is at least
  \begin{math}
\binom{\lfloor\frac{n-2}{2}\rfloor}{2}+\binom{\lceil\frac{n-2}{2}\rceil}{2}
\end{math},
and that this bound is tight for each \begin{math}n\geq 3\end{math}.
\end{abstract}

\section{Introduction}
\label{sec:in}
Throughout this paper, \begin{math}P\end{math} is a set of  \begin{math}n\geq 3\end{math} points in general position in the plane, i.e., no three points in \begin{math}P\end{math} are collinear. The \emph{edge disjointness graph} \begin{math}D(P)\end{math} of \begin{math}P\end{math} is the graph whose vertices correspond to the
closed straight  line segments with endpoints in \begin{math}P\end{math} and in which two vertices are adjacent if and only if the corresponding segments are disjoint. Figure~\ref{fig:Ejem} depicts a point set \begin{math}P\end{math}, \begin{math}\mathcal{P}\end{math} and \begin{math}D(P)\end{math}.

The edge disjointness graph and  other similar graphs were introduced by Araujo, Dumitrescu, Hurtado, Noy and Urrutia in\cite{gaby}, as geometric analogs of the
  Kneser graphs. We recall that if \begin{math}m\end{math} and \begin{math}k\end{math} are positive integers with \begin{math}k\leq  m/2\end{math}, then the \emph{Kneser graph} \begin{math}KG(m; k)\end{math} is the graph whose vertices are all the \begin{math}k\end{math}--subsets of
\begin{math}\{1,2,\ldots ,m\}\end{math} and in which two vertices are adjacent if and only if the corresponding \begin{math}k\end{math}-subsets are disjoint. Kneser conjectured~\cite{kneser} in 1956 that the chromatic number \begin{math}\chi(KG(m; k))\end{math} of \begin{math}KG(m; k)\end{math}
is equal to \begin{math} m-2k+2\end{math}. This conjecture was proved by Lov\'asz~\cite{lovasz} in 1978 using topological methods, and (independently) by B\'ar\'any~\cite{barany} in the same year. For more about the
 study of several other aspects and combinatorial properties of Kneser graphs, see for instance~\cite{albertson,matousek,chen,baptist,valencia}.

The study of the graph invariants of the edge disjointness graph \begin{math}D(P)\end{math} began in~\cite{gaby} with the estimation of a general lower bound for the chromatic number \begin{math}\chi(D(P))\end{math} of \begin{math}D(P)\end{math}.
 Up to now the problem of determining the exact value of \begin{math}\chi(D(P))\end{math} remains open in general. As far as we know, the exact
 value of \begin{math}\chi(D(P))\end{math} is known only for two particular cases: when \begin{math}P\end{math}
 is in convex position~\cite{ruy-wood,jonsson}, and when \begin{math}P\end{math} is the double chain~\cite{lomeli}. In 2017 Pach, Tardos, and T\'oth~\cite{pach-tardos-toth}
 studied the chromatic number and the clique number of \begin{math}D(P)\end{math} in the more general setting of \begin{math}\reals^d\end{math} for \begin{math}d\geq 2\end{math}, i.e., when \begin{math}P\end{math} is a subset of \begin{math}\reals^d \end{math}.
 More precisely, in~\cite{pach-tardos-toth} it was shown that the chromatic number of \begin{math}D(P)\end{math} is bounded by above by a polynomial function that depends on its clique number \begin{math}\omega(D(P))\end{math},
  and that the problem of determining any of \begin{math}\chi(D(P))\end{math} or \begin{math}\omega(D(P))\end{math} is NP-hard. Two years later, Pach and Tomon~\cite{pach-tomon}
 showed that if \begin{math}G\end{math} is the disjointness graph of a set of grounded \begin{math}x\end{math}-monotone curves in \begin{math}\reals^2\end{math} and \begin{math}\omega(G)=k\end{math}, then \begin{math}\chi(G)\leq k+1\end{math}. We remark that the set of grounded \begin{math}x\end{math}-monotone curves play the role of our closed straight line segments.

The basic notations that we will use in this work are the following. If \begin{math}x\end{math} and \begin{math}y\end{math} are distinct points of \begin{math}P\end{math}, then we shall use \begin{math}xy\end{math} to
denote the closed straight  line segment whose endpoints are \begin{math}x\end{math} and \begin{math}y\end{math}. Similarly, we will use \begin{math}\pp\end{math} to denote the set of segments \begin{math}\{xy~:~x,y\in P \mbox{ and } x\neq y\}\end{math},
and we shall refer to the elements of \begin{math}\pp\end{math} as the \emph{segments} of \begin{math}\pp\end{math}. Then, \begin{math}\pp\end{math} is the vertex set of \begin{math}D(P)\end{math}. We often make no distinction between
 an element of \begin{math}\pp\end{math} and its corresponding vertex in \begin{math}D(P)\end{math}. We also note that \begin{math}\pp\end{math} naturally defines a rectilinear drawing of \begin{math}K_n\end{math} in the plane.
 Let \begin{math}x_1y_1\end{math} and \begin{math}x_2y_2\end{math} be two distinct elements of \begin{math}\pp\end{math}, and suppose that \begin{math}x_1y_1\cap x_2y_2\neq \emptyset\end{math}. Then \begin{math}x_1y_1\cap x_2y_2\end{math}
 consists precisely of one point \begin{math}o\in \reals^2\end{math}, because \begin{math}P\end{math} is in general position. If \begin{math}o\end{math} is an interior point
  of both \begin{math}x_1y_1\end{math} and \begin{math}x_2y_2\end{math}, then we say that they \emph{cross} at \begin{math}o\end{math}.

  We will denote by \begin{math}CH(P)\end{math} the boundary of the convex hull of \begin{math}P\end{math}, and by \begin{math}\overline{P}\end{math} the set \begin{math}P\cap CH(P)\end{math}, as depicted in Figure~\ref{fig:Ejem}.
In particular, note that if \begin{math}P\end{math} is in convex position, then \begin{math}P=\overline{P}\end{math}.
 \begin{figure}
	\centering
	\includegraphics[width=0.9\textwidth]{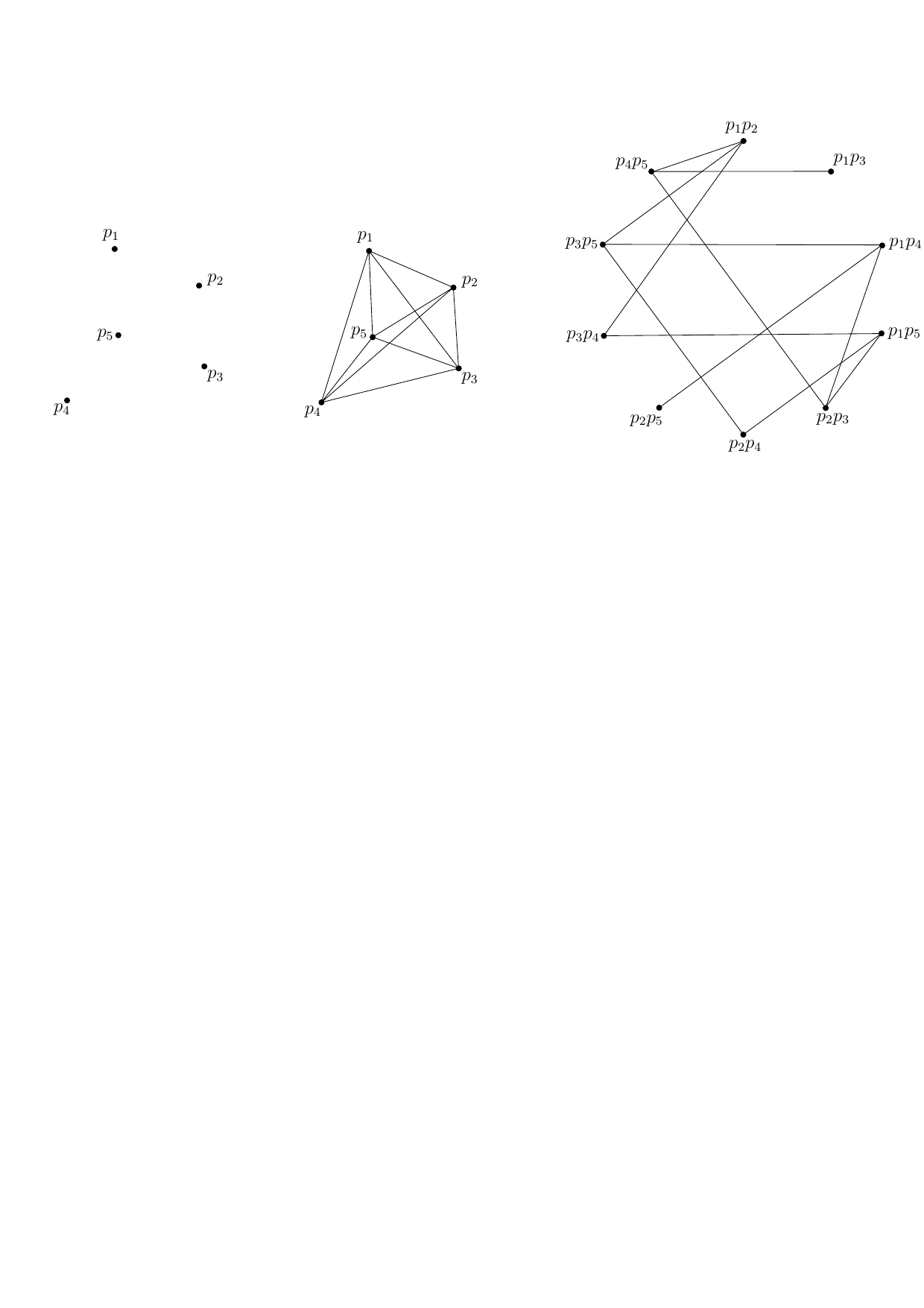}
	\caption{\small The set \begin{math}\{p_1,\ldots ,p_5\}\end{math} of points in general position on the left is \begin{math}P\end{math}. We note that \begin{math}\overline{P}=\{p_1, p_2, p_3, p_4\}\end{math}. In the middle we have \begin{math}\pp\end{math}, which can be seen as the rectilinear drawing of \begin{math}K_5\end{math} induced by \begin{math}P\end{math}. Note that \begin{math}CH(P)\end{math} is the convex quadrilateral formed by the union of the segments \begin{math}p_1p_2, \: p_2p_3, \:p_3p_4,\end{math} and \begin{math}p_4p_1\end{math}. The graph on the right is the edge disjointness graph \begin{math}D(P)\end{math}
	corresponding to \begin{math}P\end{math}.}
		\label{fig:Ejem}
\end{figure}
Let \begin{math}H=(V(H),E(H))\end{math} be a (non-empty) simple connected graph. If \begin{math}u\end{math} and \begin{math}v\end{math} are distinct vertices of \begin{math}H\end{math},
then the distance between \begin{math}u\end{math} and \begin{math}v\end{math} in \begin{math}H\end{math} will be denoted by \begin{math}d_H(u,v)\end{math}, and we write \begin{math}uv\end{math} to mean that \begin{math}u\end{math} and \begin{math}v\end{math} are adjacent in \begin{math}H\end{math}.
We  emphasize that this last notation is similar to that used to denote the straight line segment \begin{math}xy\end{math} defined by the points \begin{math}x,y\in \reals^2\end{math}.
However, none of these notations should be a source of confusion, because the former objects are vertices of a graph, and the
latter are points of the plane.

The \emph{neighborhood} of \begin{math}v\end{math} in \begin{math}H\end{math} is the set \begin{math}\{u\in V(H): uv\in E(H)\}\end{math} and is denoted by \begin{math}N_H(v)\end{math}. The \emph{degree} \begin{math}\deg_H(v)\end{math} of \begin{math}v\end{math} is the number \begin{math}|N_H(v)|\end{math}.
The number  \begin{math}\delta(H):=\min\{\deg_H(v): v\in V(H)\}\end{math} is the \emph{ minimum degree} of \begin{math}H\end{math}, and  \begin{math}\Delta(H):=\max\{\deg_H(v): v\in V(H)\}\end{math} is its \emph{ maximum degree}.
A \emph{\begin{math}u-v\end{math} path of} \begin{math}H\end{math} is a path of \begin{math}H\end{math} having an endpoint in \begin{math}u\end{math} and the other endpoint in \begin{math}v\end{math}. Similarly, if \begin{math}U\subset V(H)\end{math}, then
\begin{math}H\setminus U\end{math} is the subgraph of \begin{math}H\end{math} that results by removing the vertices of \begin{math}U\end{math} from \begin{math}H\end{math}.

We recall that if \begin{math}k\end{math} is a nonnegative integer, then \begin{math}H\end{math} is \emph{\begin{math}k\end{math}--connected} if \begin{math}|V(H)|>k\end{math} and \begin{math}H\setminus W\end{math} is connected for every set \begin{math}W\subset V(H)\end{math} with \begin{math}|W|<k\end{math}. The \emph{connectivity} \begin{math}\kappa(H)\end{math} of \begin{math}H\end{math} is the greatest integer \begin{math}k\end{math} such that \begin{math}H\end{math} is \begin{math}k\end{math}-connected.
We follow the usual convention that \begin{math}\kappa(H)=0\end{math} if and only if \begin{math}H\end{math} is disconnected or \begin{math}|V(H)|=1\end{math}.

Throughout this paper, if \begin{math}m\end{math} is a nonnegative integer, then \begin{math}[m]:=\emptyset\end{math} if \begin{math}m=0\end{math}, \begin{math}[m]:=\{1,\ldots,m\}\end{math} if \begin{math}m>0\end{math}, and
 by convention \begin{math}\binom{m}{2}:=0\end{math} if \begin{math}m<2\end{math}.

Our aim in this paper is to study the connectivity of \begin{math}D(P)\end{math}. As far as we know, this parameter of \begin{math}D(P)\end{math} has not been investigated previously, which is somehow surprising for us. Our main result is the following.
\begin{theorem}\label{thm:main} If \begin{math}P\end{math} is any set of \begin{math}n\geq 3\end{math} points in general position in the plane, then
\begin{displaymath}\kappa(D(P))\geq \binom{\lfloor\frac{n-2}{2}\rfloor}{2}+\binom{\lceil\frac{n-2}{2}\rceil}{2}.\end{displaymath}
\end{theorem}

Our next statement is an immediate consequence of Theorem~\ref{thm:main} and the well-known fact that the connectivity of
a graph is bounded by above by  its minimum degree.
\begin{corollary}\label{cor:main} If \begin{math}P\end{math} is any set of \begin{math}n\geq 3\end{math} points in general position in the plane and \begin{math}\delta(D(P))=\binom{\lfloor\frac{n-2}{2}\rfloor}{2}+\binom{\lceil\frac{n-2}{2}\rceil}{2},\end{math}
then  \begin{displaymath}\kappa(D(P))= \binom{\lfloor\frac{n-2}{2}\rfloor}{2}+\binom{\lceil\frac{n-2}{2}\rceil}{2}.\end{displaymath}
\end{corollary}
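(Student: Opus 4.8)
Write $k := \binom{\lfloor\frac{n-2}{2}\rfloor}{2}+\binom{\lceil\frac{n-2}{2}\rceil}{2}$ for the common value that appears on both sides of the asserted equality. The plan is to obtain $\kappa(D(P)) = k$ by pinching $\kappa(D(P))$ between two bounds pointing in opposite directions: the lower bound already supplied by Theorem~\ref{thm:main}, and a completely general upper bound coming from the minimum degree. No new geometric input is needed; all the substance resides in Theorem~\ref{thm:main}, which we are free to assume.

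First I would record the lower bound. Theorem~\ref{thm:main} asserts, for \emph{every} set $P$ of $n \geq 3$ points in general position, that $\kappa(D(P)) \geq k$. The corollary only strengthens the hypotheses of Theorem~\ref{thm:main} by adding the requirement $\delta(D(P)) = k$, so this lower bound is available verbatim, with no further work.

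Next I would invoke the standard inequality $\kappa(H) \leq \delta(H)$, valid for every simple graph $H$. This is the ``well-known fact'' already flagged in the text: picking a vertex $v$ of minimum degree, the set $N_H(v)$ consists of exactly $\delta(H)$ vertices whose deletion isolates $v$ (here $D(P)$ is not complete, since any two segments sharing an endpoint of $P$ are non-disjoint and hence non-adjacent, so such a separating set genuinely disconnects the graph), whence $\kappa(H) \leq |N_H(v)| = \delta(H)$. Applying this with $H = D(P)$ and substituting the hypothesis $\delta(D(P)) = k$ gives $\kappa(D(P)) \leq k$.

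Combining the two inequalities yields $k \leq \kappa(D(P)) \leq k$, hence $\kappa(D(P)) = k$, which is exactly the claim. I do not expect any genuine obstacle: the only step meriting a line of justification is the generic bound $\kappa(H)\le\delta(H)$, and even that reduces to the one-sentence cut argument above. The entire proof is therefore a two-line squeeze, and my write-up would state it as such rather than expand it.
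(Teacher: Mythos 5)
Your proposal is correct and is essentially identical to the paper's argument: the paper derives Corollary~\ref{cor:main} immediately from Theorem~\ref{thm:main} combined with the standard bound $\kappa(H)\leq\delta(H)$, exactly the squeeze you describe. Your extra remark that $D(P)$ is not complete (so the neighborhood-deletion argument for $\kappa\leq\delta$ genuinely applies) is a small but welcome point of care that the paper leaves implicit.
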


As we will see in Proposition~\ref{prop:easy}, the lower bound of \begin{math}\kappa(D(P))\end{math} given in Theorem~\ref{thm:main} is in fact a lower bound for the minimum degree of \begin{math}D(P)\end{math}.
Moreover, in that proposition we also show that each point set in the collection \begin{math}\{C_n\}_{n=3}^{\infty}\end{math}, where \begin{math}C_n\end{math} denotes the set of \begin{math}n\end{math} points in
general and convex position, satisfy the hypothesis of Corollary~\ref{cor:main}, and hence \begin{math}\kappa(D(C_n))\end{math} is equal to the lower bound in Theorem~\ref{thm:main} for each \begin{math}n\geq 3\end{math}.

The rest of the paper is organized as follows. In Section~\ref{sec:preliminaries} we introduce additional  terminology and give some auxiliary results which will be used in the proof Theorem~\ref{thm:main}. Finally, in Section~\ref{sec:main} we give the proof of our main result.


\section{Preliminaries}\label{sec:preliminaries}
For the rest of the paper, \begin{math}P\end{math} is a fixed set of \begin{math}n\geq 3\end{math} points in general position in the plane, and (for brevity)
 \begin{math}\kappa(n):=\binom{\lfloor\frac{n-2}{2}\rfloor}{2}+\binom{\lceil\frac{n-2}{2}\rceil}{2}.\end{math}
  Similarly, we will use \begin{math}\eta(P;a,b)\end{math} to denote the maximum number of pairwise internally
 disjoint \begin{math}a-b\end{math} paths of \begin{math}D(P)\end{math}.

\begin{proposition}\label{prop:easy}
If \begin{math}P\end{math} and \begin{math}n\end{math} are as above, then \begin{math}\delta(D(P))\geq \kappa(n)\end{math} and  \begin{math}\Delta(D(P))=\binom{n-2}{2}\end{math}.
If additionally \begin{math}P\end{math} is in convex position, then \begin{math}\delta(D(P))= \kappa(n)\end{math}.
\end{proposition}
\begin{proof} Let \begin{math}f=uv\end{math} be a vertex of \begin{math}D(P)\end{math}, and let \begin{math}P_1\end{math} and \begin{math}P_2\end{math} be the subsets of \begin{math}P\setminus \{u,v\}\end{math} separated by the line spanned by \begin{math}f\end{math}.
 Let \begin{math}n_1:=|P_1|\end{math} and \begin{math}n_2:=|P_2|\end{math}. Then, \begin{math}n_1+n_2=n-2\end{math}.
Since each segment of \begin{math}\pp\end{math} having both endpoints in \begin{math}P_i\end{math} (\begin{math}i=1,2\end{math}) is disjoint from \begin{math}f\end{math}, then each of these segments is adjacent to \begin{math}f\end{math} in \begin{math}D(P)\end{math}. Hence \begin{math}\deg(f)\geq \binom{n_1}{2}+\binom{n_2}{2}\end{math}.
On the other hand, it is well known that the sum  \begin{math}\binom{n_1}{2}+\binom{n_2}{2}\end{math} attains its minimum when
\begin{math}\{n_1,n_2\}=\{\lfloor \frac{n-2}{2}\rfloor, \: \lceil\frac{n-2}{2}\rceil\}\end{math}. From the last two assertions it follows that
\begin{math}\deg(f)\geq \kappa(n)\end{math}. Since \begin{math}f\end{math} is an arbitrary vertex of \begin{math}D(P)\end{math}, then
\begin{math}\delta(D(P))\geq  \kappa(n)\end{math}.

It follows from  \begin{math}n\geq 3\end{math} and the fact that \begin{math}P\end{math} is in general position that \begin{math}CH(P)\end{math} is a polygon of at least three sides. Note that if \begin{math}g=xy\end{math} is a segment (side) of \begin{math}CH(P)\end{math}, then \begin{math}g\end{math} is disjoint  from
any segment joining two points of \begin{math}P\setminus \{x,y\}\end{math}. This implies that \begin{math}\deg(g)\geq \binom{n-2}{2}\end{math}, and so \begin{math}\Delta(D(P))\geq \binom{n-2}{2}\end{math}. On the other hand, note that
for \begin{math}f=uv\end{math} there are exactly \begin{math}2(n-2)\end{math} segments of \begin{math}\pp\setminus \{f\} \end{math} that share an endpoint with \begin{math}f\end{math}, (namely, those incident with exactly one of \begin{math}u\end{math} or \begin{math}v\end{math}). Since \begin{math}f\end{math} cannot be adjacent to any of these
segments, then \begin{math}\deg(f)\leq  \binom{n}{2}-1-2(n-2)=\binom{n-2}{2}\end{math}. Again, since \begin{math}f\end{math} is an arbitrary vertex of \begin{math}D(P)\end{math}, then \begin{math}\Delta(D(P))\leq \binom{n-2}{2}\end{math}, as required.

Finally, suppose that \begin{math}P\end{math} is in convex position. Let us label the points of \begin{math}P\end{math} by  \begin{math} x_1,\: x_2,\:\ldots ,\:x_n \end{math} in clockwise order. Let \begin{math}h\end{math} be the segment of \begin{math}\pp\end{math} joining \begin{math}x_1\end{math} with \begin{math}x_j\end{math}, where \begin{math}j:=\lfloor (n+2)/2\rfloor\end{math}.
Then the line spanned by \begin{math}h\end{math} separates \begin{math} S_1:=\{x_2,\ldots ,x_{j-1}\}\end{math} from \begin{math}S_2:=\{x_{j+1},\ldots ,x_n\}\end{math}. Then \begin{math}|S_1|=\lfloor\frac{n-2}{2}\rfloor\end{math} and \begin{math}|S_2|=\lceil\frac{n-2}{2}\rceil\end{math}. Since \begin{math}P\end{math} is in convex position, then any segment of \begin{math}\pp\end{math}
with an endpoint in \begin{math}S_1\end{math} and the other in \begin{math}S_2\end{math} crosses \begin{math}h\end{math}, and hence the only neighbours of \begin{math}h\end{math} in \begin{math}D(P)\end{math} are those segments that have both endpoints in exactly one of \begin{math}S_1\end{math} or \begin{math}S_2\end{math}. This implies that
\begin{math}\deg(h)\leq \kappa(n)\end{math}, showing the last assertion of Proposition~\ref{prop:easy}.
\end{proof}

\begin{proposition}
	\label{pro:restric}
	Let \begin{math}H\end{math} be a connected graph. Then \begin{math}H\end{math} is \begin{math}k\end{math}-connected if and only if \begin{math}H\end{math} has
	\begin{math}k\end{math} pairwise internally disjoint \begin{math}a-b\end{math} paths, for any two vertices \begin{math}a\end{math} and \begin{math}b\end{math} of \begin{math}H\end{math} such that \begin{math}d_H(a,b)=2\end{math}.
\end{proposition}
\begin{proof} The forward implication follows directly from Menger's Theorem. Conversely, let \begin{math}U\end{math} be a vertex cut of \begin{math}H\end{math} of minimum order.
Let \begin{math}H_1\end{math} and \begin{math}H_2\end{math} be two distinct components  of \begin{math}H\setminus U\end{math}, and let \begin{math}u\in U\end{math}. Since \begin{math}U\end{math} is a minimum cut, then \begin{math}u\end{math} has at least a neighbor
\begin{math}v_i\end{math} in \begin{math}H_i\end{math}, for \begin{math}i=1,2\end{math}. Then \begin{math}d_H(v_1,v_2)=2\end{math}. By hypothesis, \begin{math}H\end{math} has \begin{math}k\end{math} pairwise internally disjoint \begin{math}v_1-v_2\end{math} paths. Since each of these
\begin{math}k\end{math} paths intersects \begin{math}U\end{math}, then we have that \begin{math}|U|\geq k\end{math}, as required.
\end{proof}

\begin{remark}\label{lem:main} Let \begin{math}a,b\end{math} be vertices of \begin{math}D(P)\end{math} such that \begin{math}d_{D(P)}(a,b)=2\end{math}. By Proposition~\ref{pro:restric} and
Menger's Theorem, in order to show Theorem \ref{thm:main} it is enough to show that \begin{math}\eta(P;a,b)\geq \kappa(n)\end{math}.
\end{remark}

In view of Remark~\ref{lem:main}, for the rest of the paper we can assume that \begin{math}a\end{math} and \begin{math}b\end{math} are two fixed vertices of \begin{math}D(P)\end{math} such that \begin{math}d_{D(P)}(a,b)=2\end{math}.
Then \begin{math}a\end{math} and \begin{math}b\end{math} are not adjacent in \begin{math}D(P)\end{math}, and hence \begin{math}a\cap b\neq \emptyset\end{math}. This inequality and the fact that the points of \begin{math}P\end{math} are in general position
imply that \begin{math}a\cap b\end{math} consists precisely of one point of \begin{math}\reals^2\end{math}, which will be denoted by \begin{math}o\end{math}. Then  \begin{math}a\end{math} and \begin{math}b\end{math} cross at \begin{math}o\end{math}, or \begin{math}o\end{math} is
 common endpoint of \begin{math}a\end{math} and \begin{math}b\end{math}.

An endpoint of \begin{math}a\end{math} or \begin{math}b\end{math} that is in exactly one of \begin{math}a\end{math} or \begin{math}b\end{math} will be called a \emph{leaf} of \begin{math}\{a,b\}\end{math}. Thus, if \begin{math}a\end{math} and \begin{math}b\end{math} cross at \begin{math}o\end{math}, then each endpoint of
\begin{math}a\end{math} and \begin{math}b\end{math} is a leaf. Otherwise \begin{math}o\end{math} is a common endpoint of \begin{math}a\end{math} and \begin{math}b\end{math}, and each of \begin{math}a\end{math} and \begin{math}b\end{math} has exactly one leaf, namely the endpoint of \begin{math}a\end{math} (respectively, \begin{math}b\end{math}) distinct from \begin{math}o\end{math}. In particular, note that the number of leaves of \begin{math}\{a,b\}\end{math} is 2 or 4.

By translating \begin{math}P\end{math}, if necessary, from now on we will assume that \begin{math}o=(0,0)\end{math}. Let \begin{math}\ell_a\end{math} and \begin{math}\ell_b\end{math} be the straight lines spanned by \begin{math}a\end{math} and \begin{math}b\end{math}, respectively.  Additionally, by rotating \begin{math}P\end{math} around \begin{math}o=(0,0)\end{math}, if necessary, we also can assume that the slope of some of \begin{math}\ell_a\end{math} or \begin{math}\ell_b\end{math} is positive, and that the slope of the other one is negative. Clearly, \begin{math}\reals^2 \setminus\{\ell_a, \:\ell_b\}\end{math}
consists of four open connected regions \begin{math}R_{y^-}, \: R_{y^+}, \: R_{x^-},\end{math} and \begin{math}R_{x^+}\end{math}, where \begin{math}R_{y^-}\end{math} denotes the region containing the negative \begin{math}y\end{math}-axis, and so on. For brevity, in the rest of the paper, we use \begin{math}Y^-(P):=P\cap R_{y^-}, \: Y^+(P):=P\cap R_{y^+}, \: X^-(P):=P\cap R_{x^-}\end{math},  and \begin{math}X^+(P):=P\cap R_{x^+}\end{math}.  Then, \begin{math}P\end{math} without the endpoints of \begin{math}a\end{math} and \begin{math}b\end{math} is the disjoint union of \begin{math}Y^-(P), \: Y^+(P), \: X^-(P),\end{math} and \begin{math}X^+(P)\end{math}.  Let \begin{math}r:=|Y^-(P)|, \: s:=|Y^+(P)|, \: p-1:=|X^-(P)|,\end{math} and \begin{math}q-1:=|X^+(P)|\end{math}, as depicted in Figure \ref{fig:VI}. If there is no danger of confusion, we often omit the argument \begin{math}P\end{math} in all these expressions.

We recall that the set \begin{math}\pp\end{math} of segments with endpoints in \begin{math}P\end{math} is the vertex set of \begin{math}D(P)\end{math}.
We now split the set of neighbours of \begin{math}a\end{math} and \begin{math}b\end{math} into three sets as follows.
\begin{equation*}
\begin{split}
\wa&:=\{e\in \pp~|~ e\cap a=\emptyset \mbox{ and } e\cap b\neq \emptyset\},\\
\wb&:=\{e\in \pp~|~ e\cap b=\emptyset \mbox{ and } e\cap a\neq\emptyset\},\\
\w&:=\{e\in \pp~|~ e\cap a=\emptyset \mbox{ and } e\cap b=\emptyset\}.
\end{split}
\end{equation*}

Clearly, \begin{math} \wa, \: \wb, \: \w,\end{math} and \begin{math}\{a, \:b\}\end{math} are pairwise disjoint. Moreover, note that \begin{math} N_{D(P)}(a)=\wa \cup \w\end{math} and \begin{math}N_{D(P)}(b)=\wb \cup \w\end{math}.
Let \begin{math}\delta_2:=|\w|\end{math} and \begin{math}\delta_3:=\min\{|\wa|, |\wb|\}\end{math}. Let us denote by \begin{math}\delta(P;a,b)\end{math} the minimum of \begin{math}|A\cup M|\end{math} and \begin{math}|B\cup M|\end{math}.
Then, \begin{math}\delta(P;a,b)=\delta_2+\delta_3\end{math}.

Let \begin{math}G\end{math} be the subgraph of \begin{math}D(P)\end{math} induced by \begin{math}A \cup B \cup M \cup \{a, \: b\}\end{math}. In particular, note that
\begin{math}N_G(a)=N_{D(P)}(a)\end{math} and \begin{math}N_G(b)=N_{D(P)}(b)\end{math}. We shall see later that the subgraph \begin{math}G\end{math}
contains (almost all) the \begin{math}\kappa(n)\end{math} \begin{math}a-b\end{math} paths mentioned in Remark~\ref{lem:main}. The following  observation is easy to check.
\begin{observation}\label{claim:conditions}
Let \begin{math}d, \: h \in V(G)\setminus\{a, \: b\}\end{math}. Then \begin{math}adhb\end{math} is an \begin{math}a-b\end{math} path of \begin{math}G\end{math} of length 3 if and only if \begin{math}adhb\end{math} satisfies the following conditions:
(i) \begin{math}d \in A\end{math}, (ii) \begin{math}h \in B\end{math}, and (iii) \begin{math}d\end{math} and \begin{math}h\end{math} are disjoint.
\end{observation}
The next proposition provides a useful collection \begin{math}\PP_0\end{math} of pairwise internally disjoint \begin{math}a-b\end{math} paths of \begin{math}G\end{math}.
\begin{proposition}\label{cl:P0}
Let  \begin{math}a, \: b, M,\end{math} and \begin{math}G\end{math} be as above, and let \begin{math}\PP_0:=\{aeb ~|~ e\in \w\}.\end{math} If \begin{math}\eta_2(a,b)\end{math} denotes the maximum number of pairwise internally disjoint \begin{math}a-b\end{math} paths of \begin{math}G\end{math} of length \begin{math}2\end{math}, then \begin{math}\eta_2(a,b)=\delta_2=|\PP_0|\end{math}.
 \end{proposition}
\begin{proof} From the definitions of \begin{math}\delta_2\end{math} and \begin{math}\PP_0\end{math}, it is clear that \begin{math}\delta_2=|\PP_0|\end{math}. Similarly, from the definition of \begin{math}\w\end{math}
it follows that each element of \begin{math}\PP_0\end{math} is an \begin{math}a-b\end{math} path of \begin{math}G\end{math} of length 2. Conversely, if \begin{math}T\end{math} is an \begin{math}a-b\end{math} path of \begin{math}G\end{math} of length 2, then
the definition of \begin{math}G\end{math} implies that the inner vertex of \begin{math}T\end{math} must be a segment of \begin{math}\w\end{math}, and so \begin{math}\PP_0\end{math} consists precisely of all the \begin{math}a-b\end{math} paths of \begin{math}G\end{math} of length 2.
Since the paths in \begin{math}\PP_0\end{math} have length 2, then they are pairwise internally disjoint if and only if they are pairwise distinct.
Then \begin{math}\eta_2(a,b)=|\PP_0|\end{math}, as claimed.
\end{proof}

With the facts and the terminology given in this section in mind, we are ready to prove our main result.


\section{The proof of Theorem~\ref{thm:main}}\label{sec:main}
 We start by noting that \begin{math}\kappa(3)=0, \: \kappa(4)=0,\end{math} and \begin{math}\kappa(5)=1\end{math}. On the other hand, it is straightforward to check that
\begin{math}D(P)\end{math} is a connected graph for all \begin{math}n\geq 5\end{math}. From these facts and the definition of \begin{math}\kappa(D(P))\end{math} it follows that Theorem~\ref{thm:main}
 holds for each \begin{math} n\in \{3, \: 4, \: 5\}\end{math}. Thus we may assume that \begin{math}n\geq 6\end{math} and, according to Remark~\ref{lem:main}, all we need to show is that
 \begin{math}\eta(P;a,b)\geq \kappa(n)\end{math}.

Our proof of Theorem~\ref{thm:main} is mostly constructive, and the main steps are the following. First we prove that if \begin{math}a\end{math} and \begin{math}b\end{math} cross each other and have their four leaves in \begin{math}\overline{P}\end{math} (namely, Case 1), then there is a collection of pairwise internally disjoint
\begin{math}a-b\end{math} paths with cardinality \begin{math}\eta(P;a,b)\geq \kappa(n)\end{math}. We remark that such a collection of paths will be constructed in several ways,
depending on the values of  \begin{math}p, \: q, \: r\end{math}, and \begin{math}s\end{math}. Then, we observe that many of the paths constructed in Case 1 remain well defined and useful even if \begin{math}a\end{math} and \begin{math}b\end{math} do not satisfy the conditions of Case 1. Finally, in each case distinct from Case 1, we take the useful paths given in Case 1 and complete the required  collection of \begin{math}a-b\end{math} paths in certain way (which depends on the specific case).
\vskip 0.2cm

\textsc{ Case 1.} \textbf{Suppose that \begin{math}a\end{math} and \begin{math}b\end{math} cross at \begin{math}o\end{math} and have their four leaves in \begin{math}\overline{P}\end{math}.}

By performing a suitable rotation of all the points of \begin{math}P\end{math} around \begin{math}o\end{math}, if necessary, we may assume that
\begin{math}|Y^+|\geq \max\{|Y^-|, \: |X^-|, \: |X^+|\}\end{math}. Additionally, by reflecting \begin{math}P\end{math} along the \begin{math}y\end{math}-axis, if necessary, we also can assume that \begin{math}|X^+|\geq |X^-|\end{math}.

Our approach in this case is to give an explicit collection of pairwise internally disjoint
\begin{math}a-b\end{math} paths with cardinality \begin{math}\eta(P;a,b)\geq \kappa(n)\end{math}. Roughly speaking, we start by showing that \begin{math}agb\end{math} defines an \begin{math}a-b\end{math} path in \begin{math}D(P)\end{math} of length two for each \begin{math}g\in M\end{math}. These will be the \begin{math}a-b\end{math} paths provided by \begin{math}M\end{math}. Then, in each arising subcase, we will construct (explicitly) a bijective function with domain \begin{math}A\end{math} or almost all \begin{math}A\end{math} and codomain \begin{math}B\end{math}, such that if \begin{math}d\in A\end{math} and \begin{math}h\in B\end{math} are matched by that bijection, then \begin{math}adhb\end{math} defines an \begin{math}a-b\end{math} path in \begin{math}D(P)\end{math} of length \begin{math}3\end{math}. As we will see, the \begin{math}a-b\end{math} paths provided by that bijection together with those provided by \begin{math}M\end{math} give all (or almost all) the required collection.
\begin{remark}\label{rem:order}
Hence \begin{math} p, \: q, \: r,\end{math} and \begin{math}s\end{math} are integers such that \begin{math} s\geq r\geq 0, \: q\geq p\geq 1,\end{math} and  \begin{math} n=p+q+s+r+2\end{math}.
\end{remark}
Let \begin{math}x^-_1\end{math} and \begin{math}x^+_{q+1}\end{math} (respectively, \begin{math}x^+_1\end{math} and \begin{math}x^-_{p+1}\end{math}) be the leaves of \begin{math}a\end{math} (respectively, \begin{math}b\end{math}).
By supposition, \begin{math}x^-_1, \: x^+_{q+1}, \: x^+_1,\end{math} and \begin{math}x^-_{p+1}\end{math} are in \begin{math}\overline{P}\end{math}. Without loss of gene\-rality,
 we may assume that they are placed as in Figure~\ref{fig:VI}. We now label the rest of points of \begin{math}P\end{math}
 in radial order around \begin{math}o\end{math} as follows. If \begin{math}Y^-\end{math} (respectively, \begin{math}X^-, \: Y^+, \: X^+\end{math}) is nonempty, then we let \begin{math}Y^-=\{y^-_1,\ldots,y^-_r\}\end{math}
(respectively, \begin{math} X^-=\{x^-_2,\ldots,x^-_p\}, \:  Y^+=\{y^+_1,\ldots,y^+_s\}, \:  X^+=\{x^+_q, \ldots ,x^+_2\}\end{math}), where the order listed of the points in each set
corresponds to their clockwise order around \begin{math}o\end{math}. For convenience, we define \begin{math} y^-_0:=x^+_1, \: y^-_{r+1}:=x^-_1, \: y^+_0:=x^-_{p+1},\end{math} and \begin{math}y^+_{s+1}:=x^+_{q+1}\end{math}, as depicted in Figure~\ref{fig:VI}.
\begin{figure}
	\centering
	\includegraphics[width=0.52\textwidth]{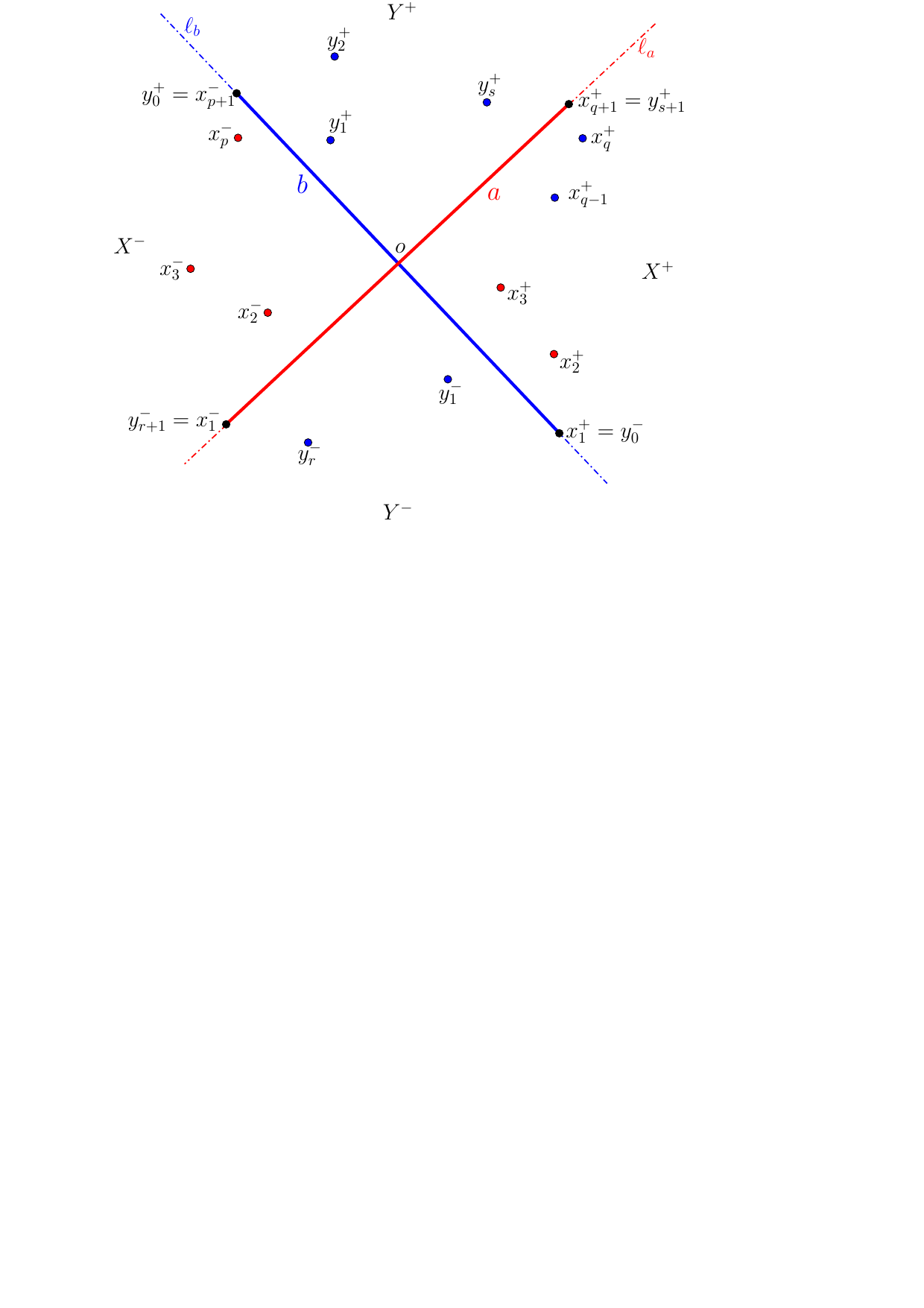}
	\caption{\small The point \begin{math}o=(0,0)\end{math} is an inner point of both \begin{math}a=x^-_1x^+_{q+1}\end{math} and \begin{math}b=x^-_{p+1}x^+_1\end{math}, and
	\begin{math}x^-_1, \: x^+_{q+1}, \: x^+_1,\end{math} and \begin{math}x^-_{p+1}\end{math} are in \begin{math}\overline{P}\end{math}. Here none of \begin{math}Y^-, \: Y^+, \: X^-,\end{math} and \begin{math}X^+\end{math} is empty.}
		\label{fig:VI}
\end{figure}
From Figure~\ref{fig:VI} it is easy to see that the segment \begin{math}uv\end{math} is a member of \begin{math}\wa\end{math}  if and only if  \begin{math}(u, v)\end{math} belongs to some of the following six subsets:
 \begin{math}X^+\times \{x^+_1\}, \: X^+\times Y^-, \: \{x^+_1\}\times Y^-, \: X^-\times \{x^-_{p+1}\}, \:  X^-\times Y^+,\end{math} and \begin{math}\{x^-_{p+1}\}\times Y^+\end{math}.
 Similarly, \begin{math}uv\end{math} is an element of \begin{math}\wb\end{math} if and only if \begin{math}(u, v)\end{math} belongs to some of the following six sets:  \begin{math}Y^+ \times \{x^+_{q+1}\}, \: Y^+\times X^+, \: \{x^+_{q+1}\}\times X^+, \: Y^-\times\{x^-_1\}, \: Y^-\times X^-,\end{math} and \begin{math}\{x^-_1\}\times X^-\end{math}. Then,
\begin{equation*}
\begin{split}
|\wa| = (q-1)+(q-1)r+r+(p-1)+(p-1)s+s= p(s+1)+q(r+1)-2,\\
|\wb| = s+s(q-1)+(q-1)+r+r(p-1)+(p-1) = p(r+1)+q(s+1)-2.\\
\end{split}
\end{equation*}
By the last equalities and Remark~\ref{rem:order} we get \begin{math}|A|\leq |B|\end{math}. And from the definition of \begin{math}\delta_3\end{math} it follows that:
\begin{equation}\label{Eq:paths3}
 \delta_3=|\wa|=p(s+1)+q(r+1)-2.
\end{equation}
We now proceed to produce the required \begin{math}\kappa(n)\end{math} \begin{math}a-b\end{math} paths.
\begin{proposition}\label{cl:Xlenght2}
Let  \begin{math} a, \: b, \: p, \: q, \: r \end{math} and \begin{math}s\end{math} be as above, and let \begin{math}\eta_2(a,b)\end{math} be the maximum number of pairwise internally disjoint \begin{math}a-b\end{math} paths of \begin{math}G\end{math} of length \begin{math}2\end{math}. Then
\begin{math}\eta_2(a,b)=\delta_2\end{math}, and \begin{displaymath}\delta_2=\binom{q-1}{2}+\binom{p-1}{2}+\binom{r}{2}+\binom{s}{2}.\end{displaymath}
 \end{proposition}
\begin{proof} From Proposition~\ref{cl:P0} we know that \begin{math}\eta_2(a,b)=\delta_2\end{math}.
On the other hand, since \begin{math}a\end{math} and \begin{math}b\end{math} cross and have four leaves in \begin{math}\overline{P}\end{math} it follows
that any segment of \begin{math}\w\end{math} must have both endpoints in exactly one of the following four sets:  \begin{math}X^+\end{math}, \begin{math}X^-\end{math}, \begin{math}Y^-,\end{math} or \begin{math}Y^+\end{math}.
Then \begin{math}\delta_2=|\w|= \binom{q-1}{2}+\binom{p-1}{2}+\binom{r}{2}+\binom{s}{2}\end{math}, as required.
\end{proof}

 For \begin{math}t\in \ZZ^+\end{math}, let \begin{math}X^+_t:=\{x^+_j~|~ x^+_j\in X^+ \mbox{ and } j\leq t\}\end{math} and  \begin{math}Y^+_t:=\{y^+_i~|~ y^+_i\in Y^+ \mbox{ and } i\leq t\}\end{math}.
Let \begin{math}\ii:=\{x^-_{p+1} y^+_{s}\}\cup\{x^+_1y^-_{r}~|~Y^-\neq\emptyset\}\cup \{x^+_1x^+_{q}~|~X^+\neq\emptyset\}\end{math}. We remark that
\begin{math}\ii=\{x^-_{p+1}y^+_{s}\}\end{math} whenever \begin{math}Y^-=\emptyset\end{math} and \begin{math}X^+=\emptyset\end{math}. Let \begin{math} A':=\wa\setminus \ii\end{math}.

 We now define in (\ref{Eq:function}) a mapping \begin{math}\psi\end{math} from \begin{math}A'\end{math} to \begin{math}\wb\end{math}. As we shall see later, \begin{math}\psi\end{math} will help us to construct a collection of \begin{math}|A'|\end{math} pairwise internally disjoint \begin{math}a-b\end{math} paths of \begin{math}G\setminus \w\end{math} of length \begin{math}3\end{math}.
\begin{equation}\label{Eq:function}
 \psi(uv):= \left\{ \begin{array}{lccc}
               \\y^-_1 x^-_{j}  &\mbox{ if } & (u,v)\in  A_1:=X^+_p\times \{x^+_1\}  \mbox{ and } uv = x^+_{j}x^+_1,\\
               \\y^-_{i+1} x^-_j &\mbox{ if } & (u,v)\in A_2:=X^+_p\times Y^-  \mbox{ and } uv=x^+_jy^-_i,\\
               \\ y^-_{i+1}x^-_1  &\mbox{ if } & (u,v)\in  A_3:=\{x^+_1\}  \times Y^-_{r-1} \mbox{ and } uv=x^+_1y^-_{i},\\
               \\ y^+_1x^+_{j} & \mbox{ if } & (u,v)\in A_4:=X^- \times \{x^-_{p+1}\}  \mbox{ and } uv=x^-_jx^-_{p+1},\\
               \\ y^+_{i+1}x^+_j   &\mbox{ if } & (u, v)\in  A_5:=X^-\times Y^+ \mbox{ and } uv=x^-_jy^+_i,\\
               \\ y^+_{i+1}x^+_{p+1}  &\mbox{ if } & (u,v)\in A_6:=\{x^-_{p+1}\}\times Y^+_{s-1} \mbox{ and } uv=x^-_{p+1} y^+_{i},\\
               \\y^+_{s-i+1} x^+_{j+1} &\mbox{ if } & (u, v)\in A_7:=(X^+\setminus X^+_p) \times Y^- \mbox{ and } uv=x^+_jy^-_i,\\
               \\x^+_{j+1}x^+_{q+1} &\mbox{ if } & (u,v)\in A_8:=(X^+_{q-1}\setminus X^+_p) \times \{x^+_1\} \mbox{ and }uv=x^+_{j}x^+_1.\\
                \end{array}
 \right.
\end{equation}
Note that if \begin{math}A_i\neq \emptyset\end{math}, then \begin{math}|\psi( A_i)|=|A_i|\end{math}. Indeed, from the definition of \begin{math}\psi\end{math} it is easy to see that,
\begin{equation}\label{Eq:imagen}
               \begin{array}{lccc}
               (1) \mbox{ } \psi( A_1) = \big\lbrace wz~|~(w,z)\in \{y^-_1\}\times X^- \big\rbrace\mbox{, and so } |\psi( A_1)|=p-1=|A_1|.\\
               (2) \mbox{ } \psi( A_2) = \big\lbrace wz~|~(w,z)\in \{y^-_2, \ldots , y^-_r, y^-_{r+1}\}\times X^- \big\rbrace \mbox{, and so } |\psi( A_2)|=r(p-1)=|A_2|.\\
               (3) \mbox{ } \psi( A_3) = \big\lbrace wz~|~(w,z)\in \{y^-_2, \ldots , y^-_r\}\times \{x^-_1\} \big\rbrace \mbox{, and so } |\psi( A_3)|=r-1=|A_3|.\\
               (4) \mbox{ } \psi( A_4) = \big\lbrace wz~|~(w,z)\in \{y^+_1\}\times X^+_p \big\rbrace \mbox{, and so } |\psi( A_4)|=p-1=|A_4|.\\
               (5) \mbox{ } \psi( A_5) = \big\lbrace wz~|~(w,z)\in \{y^+_2, \ldots , y^+_s, y^+_{s+1}\}\times X^+_p \big\rbrace \mbox{, and so } |\psi( A_5)|=s(p-1)=|A_5|.\\
               (6) \mbox{ } \psi( A_6) = \big\lbrace wz~|~(w,z)\in \{y^+_2, \ldots , y^+_s\}\times \{x^+_{p+1}\} \big\rbrace \mbox{, and so } |\psi( A_6)|=s-1=|A_6|.\\
               (7) \mbox{ } \psi( A_7) = \big\lbrace wz~|~(w,z)\in \{y^+_s,y^+_{s-1}, \ldots , y^+_{s-r+1}\}\times \{x^+_{p+2}, \ldots , x^+_{q+1} \} \big\rbrace, \\ \mbox{ and so } |\psi( A_7)|=r(q-p)=|A_7|.\\
               (8) \mbox{ } \psi( A_8) = \big\lbrace wz~|~(w,z)\in \{x^+_{p+2}, \ldots , x^+_{q}\}\times \{x^+_{q+1}\} \big\rbrace \mbox{, and so } |\psi( A_8)|=q-p-1=|A_8|.\\
                \end{array}
\end{equation}
\begin{proposition}\label{claim:function}
Let \begin{math}\psi : A' \longrightarrow \wb\end{math} be as above. Then
\begin{displaymath}\PP_1:=\big\lbrace a (uv) (\psi(uv)) b~|~uv\in A' \big\rbrace,\end{displaymath}
 is a collection of \begin{math}|A'|\end{math} pairwise internally disjoint \begin{math}a-b\end{math} paths of \begin{math}G\setminus \w\end{math} of length \begin{math}3\end{math}.
\end{proposition}
\begin{proof} Let \begin{math}uv\in A'\end{math}. From the definitions of \begin{math}uv\end{math} and \begin{math}\psi\end{math} it is easy to see that \begin{math}a (uv) (\psi(uv)) b\end{math}
satisfies each of the conditions (i)-(iii) of Observation~\ref{claim:conditions}. Then \begin{math}a (uv) (\psi(uv)) b\end{math} is an \begin{math}a-b\end{math} path of \begin{math}G\setminus \w\end{math} of length \begin{math}3\end{math}.

In order to show that the paths in \begin{math}\PP_1\end{math} are pairwise internally disjoint, it is enough to show that \begin{math}\psi\end{math} is injective.
Let \begin{math}u_1v_1\end{math} and \begin{math}u_2v_2\end{math} be distinct segments of \begin{math}A'\end{math}. A simple inspection of Equations~(\ref{Eq:imagen}) reveals that
\begin{math}\psi(A_{i_1})\cap \psi(A_{i_2})=\emptyset\end{math} whenever \begin{math}i_1\neq i_2\end{math}. This implies that  \begin{math}\psi(u_1v_1)\neq \psi(u_2v_2)\end{math} for
\begin{math}u_1v_1\in A_{i_1}\end{math} and \begin{math}u_2v_2\in A_{i_2},\end{math} with \begin{math}i_1\neq i_2\end{math}. Then we may assume that \begin{math}u_1v_1, \: u_2v_2\in A_{i_0}\end{math}
for some \begin{math}i_0\in \{1,2,\ldots ,8\}\end{math}. Again, from Equations~(\ref{Eq:imagen}) we know that \begin{math}|\psi(A_{i_0})|=|A_{i_0}|\end{math}. Since \begin{math}|A_{i_0}|\end{math} is finite, then the restriction of \begin{math}\psi\end{math}
 to \begin{math}A_{i_0}\end{math} is a bijection, and so \begin{math}\psi(u_1v_1)\neq \psi(u_2v_2)\end{math}, as required.
\end{proof}
\begin{proposition}\label{cl:Xlenght3}
Let  \begin{math} a, \: b, \: p, \:q, \:r,\end{math} and \begin{math}s\end{math} be as above, and let \begin{math}\eta_3(a,b)\end{math} be the maximum number of pairwise internally disjoint \begin{math}a-b\end{math} paths of \begin{math}G\setminus \w\end{math}  of length \begin{math}3\end{math}, then
\begin{equation}\label{Eq:eta3}
 \eta_3(a,b)= \left\{ \begin{array}{lcc}
                 \delta_3-1 & if &  q=1 \mbox{ and } r=0,\\
                   \\ \delta_3 & otherwise.  \\
             \end{array}
   \right.
\end{equation}
 \end{proposition}
\begin{proof} We recall that \begin{math}s\geq 1, \: s\geq r\geq 0\end{math}, and \begin{math}q\geq p\geq 1\end{math}. Let
 \begin{math}\psi, \:  A', \:  \ii, \:  A_i,\end{math} and \begin{math}\PP_1\end{math} be as above. From Proposition~\ref{claim:function} and Equation~(\ref{Eq:imagen})
 we know that the number of pairwise internally disjoint \begin{math}a-b\end{math} paths of \begin{math}G\setminus \w\end{math} of length \begin{math}3\end{math} provided by \begin{math}\PP_1\end{math} is equal to
 \begin{equation}\label{Eq:P}
\begin{split}
|\PP_1|=\sum_{i=1}^8 |A_i|=p(s+2)+(q-1)r-3+\max\{0,r-1\}+\max\{0,q-p-1\}.
\end{split}
\end{equation}
 From Equation~(\ref{Eq:paths3}) we know that \begin{math}\delta_3=p(s+1)+q(r+1)-2\end{math}.

\textbf{ (7.1)} Suppose that \begin{math}q=1\end{math} and \begin{math}r=0\end{math}. Since \begin{math}q\geq p\geq 1\end{math}, then \begin{math}p=1\end{math} and \begin{math}\delta_3=s\end{math}. From Equation~(\ref{Eq:P}) we have that \begin{math}|\PP_1|=s-1\end{math}, and so  \begin{math}\eta_3(a,b)\geq s-1\end{math}.

On the other hand, note that if \begin{math}adhb\end{math} is an \begin{math}a-b\end{math} path of \begin{math}G\setminus \w\end{math} of length 3, then there exist integers \begin{math}i, j\end{math}
 such that \begin{math}1\leq i<j\leq s\end{math}, and \begin{math} d=x^-_{p+1}y^+_i, \: h=x^+_{q+1}y^+_j\end{math}. From \begin{math}1\leq i<j\leq s\end{math} we can deduce that the number of such pairs \begin{math}(y^+_i,y^+_j)\end{math} is at most
 \begin{math}s-1\end{math}, and so \begin{math}\eta_3(a,b)=s-1\end{math}, as required. This proves the Case (7.1).

On the other hand, by Observation~\ref{claim:conditions} and Equation~(\ref{Eq:paths3}) we have that \begin{math}\eta_3(a,b)\leq \delta_3=p(s+1)+q(r+1)-2\end{math}. Then, for the rest of the cases, it is enough to exhibit a collection  of \begin{math}p(s+1)+q(r+1)-2\end{math} pairwise internally disjoint \begin{math}a-b\end{math} paths of \begin{math}G\setminus \w\end{math} of length \begin{math}3\end{math}.

\textbf{ (7.2)} Suppose that \begin{math}q>p\end{math} and \begin{math}r\geq 1\end{math}. From Equation~(\ref{Eq:P}) we have that
\begin{displaymath}|\PP_1|=p(s+2)+(q-1)r-3+(r-1)+(q-p-1)=p(s+1)+q(r+1)-5.\end{displaymath}
Let \begin{math}d_1:=x_{p+1}^-y^+_s, \: d_2:=x^+_1x^+_q, \: d_3:=x^+_1y^-_r, \: h_1:=x^+_2x^+_{q+1}, \: h_2:=y^+_1x^+_{q+1},\end{math} and \begin{math}h_3:=x^-_1y^-_1\end{math}, as depicted in Figure~\ref{fig:six}.
\begin{figure}
	\centering
	\includegraphics[width=0.55\textwidth]{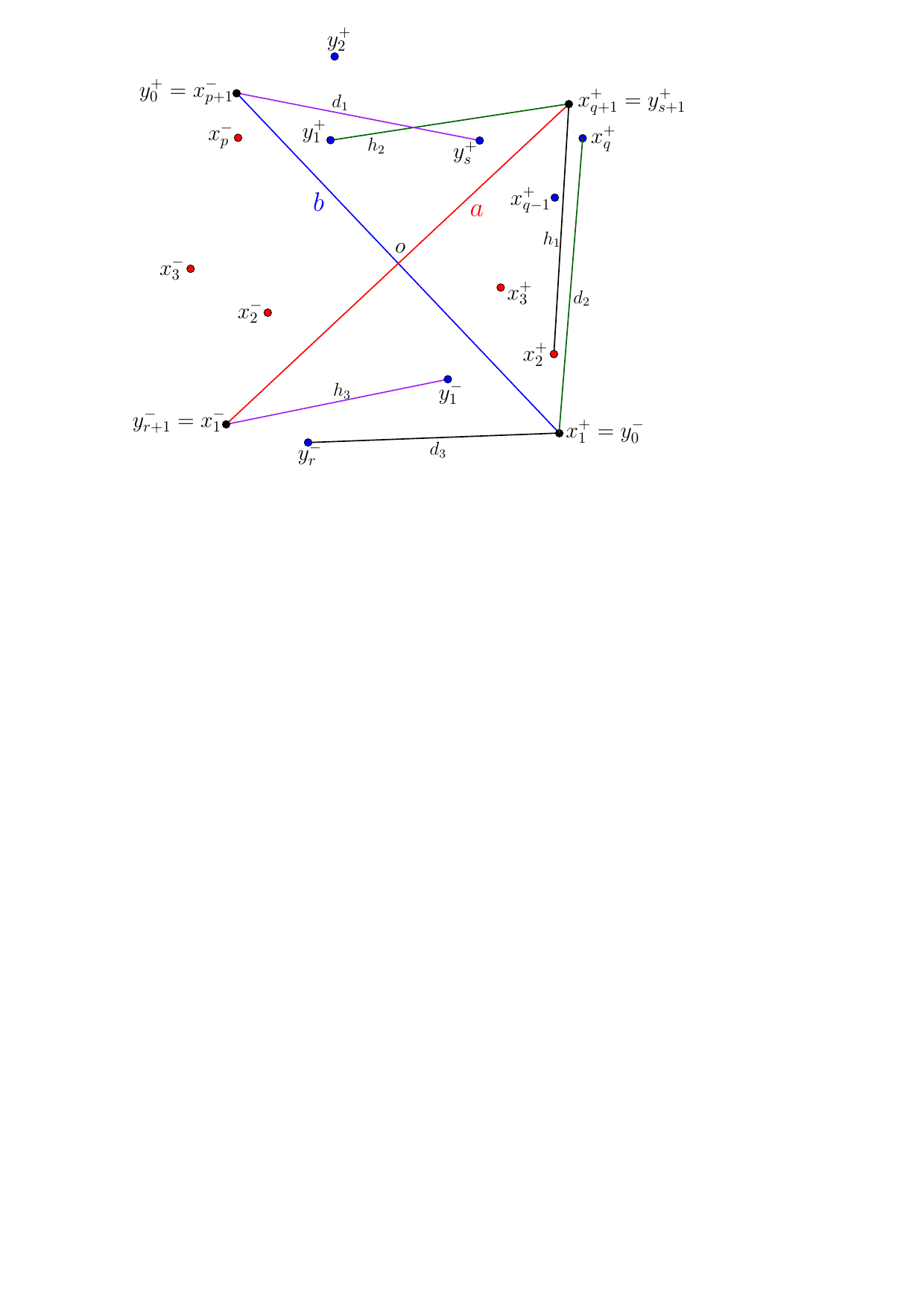}
	\caption{\small Here \begin{math}q>p\end{math} and \begin{math}r\geq 1\end{math}. Note that \begin{math}\ii=\{d_1, d_2, d_3\}\end{math} and \begin{math}\{h_1,h_2,h_3\}\in \wb\end{math}.}
		\label{fig:six}
\end{figure}
From \begin{math}q>p, \: r\geq 1\end{math}, and the definition of \begin{math}\PP_1\end{math} it is easy to check that none of \begin{math}d_1, \: d_2, \: d_3, \: h_1, \: h_2,\end{math} and \begin{math}h_3\end{math} belongs to any path of \begin{math}\PP_1\end{math}.
Similarly, note that \begin{math}d_i\in \wa, \: h_i\in \wb,\end{math} and \begin{math}d_i\cap h_{3-i+1}=\emptyset\end{math}, for \begin{math}i=1,2,3\end{math}. From these facts and
 Observation~\ref{claim:conditions} it follows that
\begin{displaymath}\PP_2:=\PP_1\cup \{a d_1 h_3 b, a d_2 h_2 b, a d_3 h_1 b\},\end{displaymath}
\noindent is the required collection.

\textbf{ (7.3)} Suppose that \begin{math}q>p\end{math} and \begin{math}r=0\end{math}. From Equation~(\ref{Eq:P}) we have that
 \begin{displaymath}|\PP_1|=p(s+2)+(q-1)r-3+(q-p-1)=p(s+1)+q(r+1)-4.\end{displaymath}
Let \begin{math}d_1, \: d_2, \: h_1,\end{math} and  \begin{math}h_2\end{math} as in Case (7.2). Again, note that \begin{math}d_i\in \wb, \: h_i\in \wa,\end{math} and \begin{math}d_i\cap h_{i}=\emptyset\end{math}, for \begin{math}i=1,2\end{math}. These and Observation~\ref{claim:conditions} imply that
\begin{displaymath}\PP_3:=\PP_1\cup \{a d_1 h_1 b, a d_2 h_2 b\},\end{displaymath}
\noindent is the required collection.

\textbf{ (7.4)} Suppose that \begin{math}q=p\end{math} and \begin{math}r=0\end{math}. By Case (7.1) we may assume that \begin{math}q=p\geq 2\end{math}. Then Equation~(\ref{Eq:P}) implies
that \begin{displaymath}|\PP_1|=p(s+2)+(q-1)r-3=p(s+1)+q(r+1)-3.\end{displaymath}
Since \begin{math}(d_1,h_1)\end{math} satisfies the conditions of  Observation~\ref{claim:conditions}, then \begin{displaymath}\PP_4:=\PP_1\cup \{a d_1 h_1 b\},\end{displaymath}
\noindent is the required collection.

\textbf{ (7.5)} Suppose that \begin{math}q=p\end{math} and \begin{math}r\geq 1\end{math}. Then Equation~(\ref{Eq:P}) implies
that \begin{displaymath}|\PP_1|=p(s+2)+(q-1)r-3+(r-1)=  p(s+1)+q(r+1)-4.\end{displaymath}
Let \begin{math}d_1, \: d_3, \: h_2,\end{math} and  \begin{math}h_3\end{math} as in Case (7.2).  Again, note that each pair \begin{math}(d_1,h_3)\end{math} and \begin{math}(d_3,h_2)\end{math} satisfies the conditions of
Observation~\ref{claim:conditions}, and so \begin{displaymath}\PP_5:=\PP_1\cup \{a d_1 h_3 b, a d_3 h_2 b\},\end{displaymath}
\noindent is the required collection.
\end{proof}
\begin{lemma}\label{lem:X}
If \begin{math}a\end{math} and \begin{math}b\end{math} cross at \begin{math}o\end{math} and have their four leaves in \begin{math}\overline{P}\end{math}, then \begin{math}\eta(P;a,b)=\delta(P;a,b).\end{math}
 \end{lemma}
\begin{proof} Trivially, \begin{math}\eta(P;a,b)\leq \delta(P;a,b)\end{math}. Then we need to show that \begin{math}\eta(P;a,b)\geq \delta(P;a,b)\end{math}.

 Let \begin{math} p, \: q, \: r, \: s, \: X^-, \: X^+, \: Y^-,\end{math} and \begin{math}Y^+\end{math} be as above.
For \begin{math} i\in \{1,\ldots ,5\}\end{math}, let \begin{math}\PP_0\end{math} and \begin{math}\PP_i\end{math} be as in the proofs of Propositions~\ref{cl:Xlenght2} and~\ref{cl:Xlenght3}, respectively. From the definition of
 \begin{math}\PP_i\end{math}, we know that no segment of \begin{math}\w\end{math} belongs to any path of \begin{math}\PP_i\end{math}, and hence \begin{math}\PP_0\cup \PP_i\end{math} is a collection of
\begin{math}|\PP_0|+|\PP_i|\end{math} pairwise internally disjoint \begin{math}a-b\end{math} paths of \begin{math}G\end{math}.

Let us first assume that the endpoints of \begin{math}a\end{math} and \begin{math}b\end{math} are not consecutive in \begin{math}\overline{P}\end{math}. Then at least one of \begin{math}q>1\end{math} or \begin{math}r>0\end{math} holds, and the corresponding
case in the proof of Proposition~\ref{cl:Xlenght3} is the Case (7.\begin{math}i\end{math}) for some \begin{math}i\in \{2,3,4,5\}\end{math}. In any of these four cases, we know from
Proposition~\ref{cl:Xlenght3} that \begin{math}|\PP_i|=\delta_3\end{math}, and so \begin{math}\PP_0\cup \PP_i\end{math} provides \begin{math}\delta_2+\delta_3=\delta(P;a,b)\end{math} pairwise internally disjoint \begin{math}a-b\end{math}
  paths of \begin{math}G\end{math}, and so \begin{math}\eta(P;a,b)\geq \delta(P;a,b)\end{math}, as required.

We now assume that the four endpoints of \begin{math}a\end{math} and \begin{math}b\end{math} are consecutive in \begin{math}\overline{P}\end{math}. Since \begin{math}Y^+\neq \emptyset\end{math}, then this case corresponds precisely
to the case in which \begin{math}X^-=\emptyset, \: Y^-=\emptyset,\end{math} and \begin{math}X^+=\emptyset\end{math} (or equivalently, \begin{math}q=p=1\end{math} and \begin{math}r=0\end{math}). Then \begin{math}a=x_1^-x_2^+\end{math} and \begin{math}b=x_1^+x_2^-\end{math}.
Moreover, note that \begin{math} x^-_2 x^-_1, \: x^-_1 x^+_1\end{math} and \begin{math} x^-_1 x^+_1, \: x^+_1 x^+_2\end{math} are pairs of consecutive segments in \begin{math}CH(P)\end{math}. Since \begin{math}n\geq 6\end{math}, then \begin{math}s= |Y^+| \geq 2\end{math}, and so
 \begin{math}y^+_1\end{math} and \begin{math}y^+_s\end{math} exist and are distinct. It is not hard to check that none of \begin{math} x^-_1 x^+_1, \: x^-_2 y^+_s,\end{math} or \begin{math}y^+_1x^+_2\end{math} belongs to any path of  \begin{math}\PP_0\cup \PP_1\end{math}.
 Since \begin{math}T^*:= a (x^-_2 y^+_s ) (x^-_1 x^+_1) (y^+_1x^+_2) b \end{math} is an \begin{math}a-b\end{math} path of \begin{math}D(P)\end{math} (of length 4), then from Propositions~\ref{cl:Xlenght2} and~\ref{cl:Xlenght3}
 we know that \begin{math}\PP_0\cup \PP_1 \cup \{ T^* \}\end{math} is a collection of  \begin{math}\delta_2+(\delta_3-1)+1=\delta(P;a,b)\end{math} pairwise internally disjoint \begin{math}a-b\end{math}
  paths of \begin{math}D(P)\end{math}, and hence \begin{math}\eta(P;a,b)\geq \delta(P;a,b)\end{math}, as required.
\end{proof}

The next result follows directly from Lemma~\ref{lem:X} and Proposition~\ref{prop:easy}, and concludes the proof of Case 1.
\begin{corollary}\label{cor:X}
 If \begin{math}a\end{math} and \begin{math}b\end{math} cross at \begin{math}o\end{math} and have their four leaves in \begin{math}\overline{P}\end{math}, then \begin{math}\eta(P;a,b)\geq \kappa(n).\end{math}
 \end{corollary}

We now emphasize some crucial properties of the collections of \begin{math}a-b\end{math} paths constructed above, which will be exploited in the next two cases. Note that all the \begin{math}\delta(P;a,b)\end{math} paths,
except
\begin{math}T^*= a (x^-_2 y^+_s ) (x^-_1 x^+_1) (y^+_1x^+_2) b\end{math}, provided by Lemma~\ref{lem:X} are contained in \begin{math}G\end{math}, and that only in the case when \begin{math}q=p=1,\end{math} and \begin{math}r=0\end{math}
was needed to use exactly a vertex not in \begin{math}G\end{math}, namely \begin{math}x^-_1 x^+_1\end{math}.

We will say that an \begin{math}a-b\end{math} path \begin{math}afgb\end{math} of length \begin{math}3\end{math} of \begin{math}D(P)\end{math} is \emph{ordered with respect to} \begin{math}o\end{math} if there exists  a straight line
\begin{math}\ell\end{math} passing through \begin{math}o\end{math} such that the vertices (segments) \begin{math}f\end{math} and \begin{math}g\end{math} lie on distinct sides of \begin{math}\ell\end{math}. Similarly, a collection of \begin{math}a-b\end{math} paths of \begin{math}D(P)\end{math} will be called \emph{ordered}
 if each of its paths of length \begin{math}3\end{math} is ordered with respect to \begin{math}o\end{math}. The following observation is easy to check and will be used in the next cases.
\begin{observation}\label{obs:ordered}
 Each path of length 3 constructed in Lemma~\ref{lem:X} is ordered with respect to \begin{math}o\end{math}, and hence each of the
 collections of \begin{math}\delta(P;a,b)\end{math} \begin{math}a-b\end{math} paths given in the proof of Lemma~\ref{lem:X} is ordered.
 \end{observation}

\textsc{ Case 2.} \textbf{Suppose that \begin{math}o\end{math} is a common endpoint of \begin{math}a\end{math} and \begin{math}b\end{math}, and that their two leaves are in \begin{math}\overline{P}\end{math}.}

By performing a suitable rotation of \begin{math}P\end{math} around \begin{math}o\end{math}, if necessary, we may assume that
the leaves of \begin{math}a\end{math} and \begin{math}b\end{math} have negative \begin{math}y\end{math}-coordinate.  Let \begin{math}x^-_1\end{math} and  \begin{math}x^+_1\end{math} be the leaves of \begin{math}a\end{math} and \begin{math}b\end{math}, respectively. Then \begin{math}a=ox^-_1\end{math} and  \begin{math}b=ox^+_1\end{math}.
Also, by reflecting \begin{math}P\end{math} along the \begin{math}y\end{math}-axis, if necessary, we can assume that \begin{math}p-1=|X^-(P)|\leq |X^+(P)|=q-1\end{math}. In order to use some of the facts showed in Case 1,
let us label the points of \begin{math}P\setminus \{o, x^-_1, x^+_1\}\end{math} as in that case. In particular, we let \begin{math}y_0^-:=x_1^+\end{math} and \begin{math}y_{r+1}^-:=x_1^-\end{math}, as depicted in Figure~\ref{fig:V}.
\begin{remark}\label{rem:order-v}
Hence \begin{math}p, \: q, \: r,\end{math} and \begin{math}s\end{math} are integers such that \begin{math}r, \: s\geq 0,\end{math} \begin{math}q\geq p\geq 1,\end{math} and \begin{math}n=p+q+s+r+1\end{math}.
\end{remark}
 Our strategy to prove Case 2 is as follows. The first part of the proof corresponds to the case in which \begin{math}s=0\end{math}, and the argument proceeds in the same way as in Case 1, but much shorter by virtue of several facts stated in Case 1. The last part of the proof is checked by induction on \begin{math}n\end{math}.
\begin{figure}
	\centering
	\includegraphics[width=0.45\textwidth]{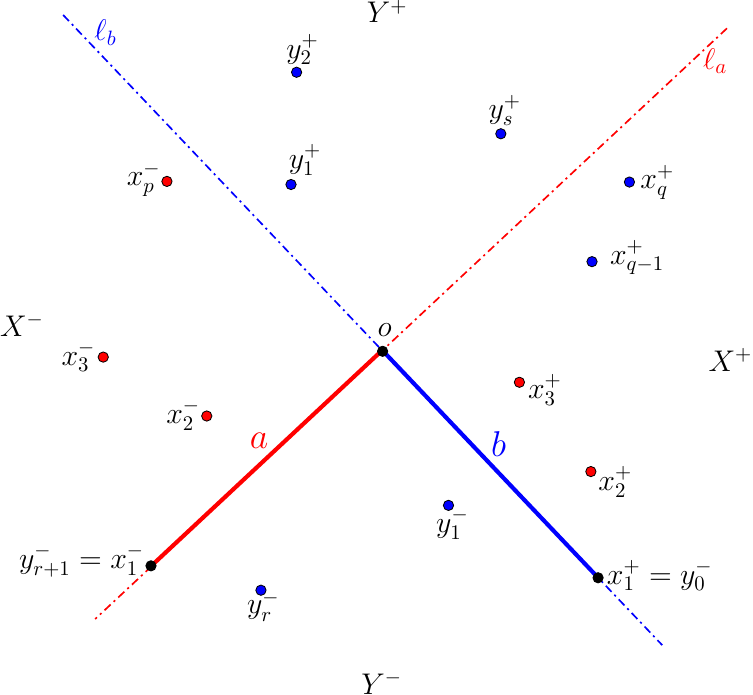}
	\caption{\small Here \begin{math}x^-_1, x^+_1\in \overline{P}\end{math}, \begin{math}a=ox^-_1,\end{math} \begin{math}b=ox^+_1\end{math}, and \begin{math}o=a\cap b\end{math} is a common endpoint of  \begin{math}a\end{math} and \begin{math}b\end{math}.}
\label{fig:V}
\end{figure}

The following is an analogue of Proposition~\ref{cl:Xlenght2}. The proof follows the same lines, and so we omit it.
\begin{proposition}\label{cl:Vlength2}
Let  \begin{math}a, b, p,q,r\end{math} and \begin{math}s\end{math} be as above, and let \begin{math}\eta_2(a,b)\end{math} be the maximum number of pairwise internally disjoint \begin{math}a-b\end{math} paths of \begin{math}G\end{math} of length \begin{math}2\end{math}.
Then \begin{math}\eta_2(a,b)=\delta_2=|\PP_0|\end{math}, and
\begin{displaymath}\delta_2\geq \binom{q-1}{2}+\binom{p-1}{2}+\binom{r}{2}+\binom{s}{2}+(p-1)s+(q-1)s,\end{displaymath}
where \begin{math}\PP_0\end{math} is as in Proposition~\ref{cl:P0}.
 \end{proposition}

\textsc{ Case 2.1}. \textbf{Suppose that \begin{math}s=0\end{math}.} Then \begin{math}\eta_2(a,b)\geq \binom{q-1}{2}+\binom{p-1}{2}+\binom{r}{2}\end{math} by Proposition~\ref{cl:Vlength2}, and so
\begin{math}D(P)\end{math} has a collection \begin{math}\PP_0\end{math} of pairwise internally disjoint \begin{math}a-b\end{math} paths of length 2 with \begin{math}|\PP_0|\geq \binom{q-1}{2}+\binom{p-1}{2}+\binom{r}{2}\end{math}.

As before, from Figure~\ref{fig:V} it follows that the segment \begin{math}uv\end{math} is a member of \begin{math}A\end{math} if and only if
 \begin{math}(u, v)\end{math} belongs to some of the following three sets:  \begin{math}X^+\times \{x^+_1\}, \{x^+_1\}\times Y^-, X^+\times Y^-\end{math}.
 Similarly, \begin{math}uv\end{math} is an element of \begin{math}B\end{math} if and only if  \begin{math}(u, v)\end{math} belongs to some of the following three subsets:
  \begin{math} Y^-\times\{x^-_1\}, \{x^-_1\}\times X^-,\end{math} and \begin{math}Y^-\times X^-\end{math}. Then,
\begin{equation*}
\begin{split}
|A|=(q-1+r)+r(q-1)=q(r+1)-1,\\
|B|=(p-1+r)+r(p-1)=p(r+1)-1.
\end{split}
\end{equation*}
By the last equalities and Remark~\ref{rem:order-v} we get \begin{math}|B|\leq |A|\end{math}. And from the definition of \begin{math}\delta_3\end{math} it follows that:

\begin{equation}\label{Eq:paths3V}
\delta_3=|B|=p(r+1)-1.
 \end{equation}
For \begin{math}t\in \ZZ^+\end{math}, let \begin{math}X^+_t, Y^+_t, A_1, A_2\end{math} and \begin{math}A_3\end{math} be as in Case 1. We remark that each of these sets is well defined in the context of current case. Then, for \begin{math} A'':=A_1\cup A_2\cup A_3\end{math}, we let \begin{math}\phi:A''\to B\end{math} denote the restriction of \begin{math}\psi\end{math} to \begin{math}A''\end{math}, where \begin{math}\psi\end{math} is as in Eq.~(\ref{Eq:function}). Then, according to~(\ref{Eq:imagen}), the following hold.
\begin{equation}\label{Eq:fV}
\begin{array}{lccc}
               (1) \mbox{ } \phi( A_1) = \big\lbrace wz~|~(w,z)\in \{y^-_1\}\times X^- \big\rbrace\mbox{, and so } |\phi( A_1)|=p-1=|A_1|.\\
               (2) \mbox{ } \phi( A_2) = \big\lbrace wz~|~(w,z)\in \{y^-_2, \ldots , y^-_r, y^-_{r+1}\}\times X^- \big\rbrace \mbox{, and so } |\phi( A_2)|=r(p-1)=|A_2|.\\
               (3) \mbox{ } \phi( A_3) = \big\lbrace wz~|~(w,z)\in \{y^-_2, \ldots , y^-_r\}\times \{x^-_1\} \big\rbrace \mbox{, and so } |\phi(A_3)|=r-1=|A_3|.\\
               \end{array}
\end{equation}
The next statement is an immediate consequence of Proposition~\ref{claim:function}.
\begin{proposition}\label{cl:Vfunction}
Let \begin{math}\phi : A'' \longrightarrow B\end{math} be as above. Then
\begin{displaymath}\PP_1:=\big\lbrace a (uv) (\phi(uv)) b~|~uv\in A'' \big\rbrace,\end{displaymath}
 is a collection of \begin{math}|A''|\end{math} pairwise internally disjoint \begin{math}a-b\end{math} paths of \begin{math}G\setminus M\end{math} of length \begin{math}3\end{math}.
\end{proposition}
Let \begin{math}\jj:=\{x^+_1x^+_{j}~|~j=p+1, \dots, q \}\end{math}.
Note that \begin{math}\jj\cap A''=\emptyset\end{math}, and also that \begin{math}\jj=\emptyset\end{math} for \begin{math}p=q\end{math}.
\begin{proposition}\label{cl:Vlength3}
Let  \begin{math}a, b, p,q,r\end{math} and \begin{math}s\end{math} be as above, and let \begin{math}\eta_3(a,b)\end{math} be the maximum number of pairwise internally disjoint \begin{math}a-b\end{math} paths of \begin{math}G\setminus M\end{math}  of length \begin{math}3\end{math}, then
\begin{equation}\label{Eq:eta3V}
\eta_3(a,b)\geq \left\{ \begin{array}{lcc}
                 \delta_3-1 & if &  p=q \mbox{ and } r>0,\\
                   \\ \delta_3 & otherwise.
             \end{array}
   \right.
\end{equation}
 \end{proposition}
\begin{proof} We recall that \begin{math}s=0\end{math}, \begin{math} r\geq 0\end{math}, and \begin{math} q\geq p\geq 1\end{math}. Let
 \begin{math}A_i, \phi, A'', \jj,\end{math} and \begin{math}\PP_1\end{math} be as above. From Proposition~\ref{cl:Vfunction} and Equation~(\ref{Eq:fV})
 we know that the number of pairwise internally disjoint \begin{math}a-b\end{math} paths of \begin{math}G\setminus M\end{math} of length \begin{math}3\end{math} provided by \begin{math}\PP_1\end{math} is equal to
 \begin{equation}\label{Eq:PV}
\begin{split}
|\PP_1|=\sum_{i=1}^3 |A_i|=p-1 + r(p-1)+\max\{0,r-1\}.
\end{split}
\end{equation}
 From Equation~(\ref{Eq:paths3V}) we recall that \begin{math}\delta_3=p(r+1)-1\end{math}.

Note that \begin{math}r= 0\end{math} implies \begin{math}\delta_3=p-1=|\PP_1|\end{math}, and so \begin{math}\eta_3(a,b)\geq |\PP_1|=\delta_3\end{math}, as required.
Thus we may assume that \begin{math}r\geq 1\end{math}, and so \begin{math}|\PP_1|=p(r+1)-2=\delta_3-1\end{math} by Eq.~(\ref{Eq:PV}).
It remains to show that if \begin{math}r\geq 1\end{math} and \begin{math}q>p\end{math}, then \begin{math}G\setminus M\end{math} has an \begin{math}a-b\end{math} path of length 3 that is independent of those in \begin{math}\PP_1\end{math}.

Suppose first that \begin{math}q>p\end{math}, and let \begin{math} d:=x^+_1x^+_{p+1}\in \jj\end{math} and \begin{math}h:=x^-_1y^-_1\end{math}. From \begin{math}q>p \geq 1 \end{math} and \begin{math}r\geq 1\end{math}, and the definition of \begin{math}\PP_1\end{math} it is easy to check that none
 of \begin{math}d\end{math} and \begin{math}h\end{math} belongs to any path of \begin{math}\PP_1.\end{math} Similarly, note that \begin{math}d\in A, h\in B,\end{math} and \begin{math} d\cap h=\emptyset\end{math}. From these facts and
 Observation~\ref{claim:conditions} it follows that \begin{math}adhb\end{math} is the required path.
\end{proof}
\begin{lemma}\label{lem:s=0}
If  \begin{math}a=ox^-_1\end{math}, \begin{math}b=ox^+_1\end{math} and \begin{math}x^-_1, x^+_1\in \overline{P}\end{math} and \begin{math}s=0\end{math}, then \begin{math}\eta(P;a,b)\geq\kappa(n).\end{math}
 \end{lemma}
\begin{proof}  Let \begin{math}p, \: q, \: r, \: s, \PP_0\end{math} and \begin{math}\PP_1\end{math} be as above. We recall that \begin{math}\delta(P;a,b)=\delta_2+\delta_3\end{math}.
As \begin{math}\delta(P;a,b)\geq \kappa(n)\end{math} by Proposition~\ref{prop:easy}, it is enough to show that \begin{math}\eta(P;a,b)\geq\delta_2+\delta_3\end{math}.

Suppose first that \begin{math}q>p\end{math} or \begin{math}r=0\end{math}. Then Propositions~\ref{cl:Vlength2} and \ref{cl:Vlength3} imply \begin{math}\eta_2(a,b)= \delta_2\end{math} and  \begin{math}\eta_3(a,b)\geq \delta_3\end{math}, respectively.
Therefore, \begin{math}\eta(P;a,b)\geq \delta_2+\delta_3\end{math}, as desired.

Suppose now that \begin{math}q=p=1\end{math} and \begin{math}r>0\end{math}. Then, \begin{math}r=n-3\end{math} because \begin{math}s=0\end{math}. Note that the assertion trivially holds for \begin{math}n=6\end{math}. Indeed,
 note that \begin{math}\kappa(6)=2\end{math} and \begin{math}\eta(P;a,b)\geq \delta_2=3\end{math} by Proposition~\ref{cl:Vlength2}. Thus, we can assume that \begin{math}n\geq 7\end{math}. From
 Propositions~\ref{cl:Vlength2} and \ref{cl:Vlength3} we know that \begin{math}\eta_2(a,b)\geq \binom{n-3}{2}\end{math} and \begin{math}\eta_3(a,b)\geq \delta_3-1=r-1=n-4\geq 3\end{math}, respectively.
 Then, \begin{math}\eta(a,b) \geq \binom{n-3}{2}+(n-4) \geq \binom{n-3}{2} + \binom{3}{2} \geq \kappa(n)\end{math}.

Suppose finally that \begin{math}q=p\geq 2\end{math} and \begin{math}r>0\end{math}. Since none of \begin{math}d:=x^+_1y^-_{r}\end{math}, \begin{math}h:=x^-_1y^-_1\end{math}, and \begin{math}e:=ox^+_q\end{math} belongs to any path of \begin{math}\PP_1\end{math},
then the \begin{math}a-b\end{math} path \begin{math}adehb\end{math} together with those \begin{math}\delta_2+\delta_3-1\end{math} provided
by Propositions~\ref{cl:Vlength2} and \ref{cl:Vlength3} imply \begin{math}\eta(a,b)\geq \delta_2+\delta_3\end{math}, as required.
\end{proof}

It is not hard to see that each \begin{math}a-b\end{math} path of length 3 described in Proposition~\ref{cl:Vlength3} is ordered with respect to \begin{math}o\end{math}, and so we can conclude
that Case 2.1 holds.

\textsc{ Case 2.2}. \textbf{Suppose that \begin{math}s\geq 1\end{math}.} We need to show that if \begin{math}s\geq 1\end{math}, then \begin{math}D(P)\end{math} has an ordered collection \begin{math}\TT\end{math} of \begin{math}\kappa(n)\end{math} pairwise internally disjoint \begin{math}a-b\end{math} paths. We proceed by induction on \begin{math}n\end{math}. As base case we take \begin{math}n=4\end{math}, for which there is nothing to prove because \begin{math}\kappa(4)=0\end{math}.
Thus, we can assume that \begin{math}n\geq 5\end{math} and that the statement holds for all \begin{math}m\in \{4,\ldots , n-1\}\end{math}. Since \begin{math}s\geq 1\end{math}, then \begin{math}Y^+(P)\end{math} has at least one point, say \begin{math}u\end{math}.

Suppose first that \begin{math}r=0\end{math}. By the induction hypothesis, we can assume that \begin{math}D(P\setminus\{u\})\end{math} has an ordered collection \begin{math}\TT'\end{math} of at least \begin{math}\kappa(n-1)\end{math} pairwise internally disjoint \begin{math}a-b\end{math} paths. Let \begin{math}{\mathcal Y}_u\end{math} be the set of straight segments with an endpoint at \begin{math}u\end{math} and the other in
\begin{math}X^-(P)\cup (Y^+(P)\setminus \{u\}) \cup X^+(P)\end{math}. Clearly, such segments are not vertices of
\begin{math}D(P\setminus \{u\})\end{math}. Moreover, since \begin{math}u\in Y^+(P)\end{math}, then each \begin{math} h\in {\mathcal Y}_u\end{math} is disjoint from \begin{math}a\end{math} and \begin{math}b\end{math}, and hence \begin{math}ahb\end{math} is an \begin{math}a-b\end{math} path of length 2 of \begin{math}D(P)\end{math}. The
\begin{math}\kappa(n-1)\end{math} \begin{math}a-b\end{math} paths of \begin{math}\TT'\end{math}, together with the \begin{math}(p-1)+(s-1)+(q-1)\end{math} paths provided by the elements of \begin{math}{\mathcal Y}_u\end{math}, yield an ordered collection \begin{math}\TT\end{math} of at least
\begin{math}\kappa(n-1)+p+q+s-3\end{math} pairwise internally disjoint \begin{math}a-b\end{math} paths of \begin{math}D(P)\end{math}.

On the other hand, from the definition of \begin{math}\kappa(n)\end{math} it is easy to see that
\begin{equation}\label{Eq:kappa}
 \kappa(n)-\kappa(n-1)= \left\{ \begin{array}{lcc}
                 \frac{n-3}{2} & if & n\mbox{ is odd,}\\
                 \frac{n-4}{2} & if & n\mbox{ is even.}\\
             \end{array}
   \right.
\end{equation}

From Remark~\ref{rem:order-v} we know that \begin{math}p+q+s+r=n-1\end{math}. This equality and \begin{math}r=0\end{math} imply \begin{math}p+q+s-3=n-4\end{math}. From
 Eq.~(\ref{Eq:kappa}) it is easy to see that \begin{math}\kappa(n-1)+(n-4)\geq \kappa(n)\end{math}, as claimed.

Suppose now that \begin{math}r\geq 1\end{math}. Then \begin{math}Y^-(P)\end{math} has at least one point, say \begin{math}v\end{math}. Let \begin{math}u\in Y^+(P)\end{math} and \begin{math}{\mathcal Y}_u\end{math} be as above.
By the induction hypothesis, we can assume that \begin{math}D(P\setminus\{u,v\})\end{math} has an ordered collection \begin{math}\TT'\end{math} of at least \begin{math}\kappa(n-2)\end{math} pairwise internally disjoint \begin{math}a-b\end{math} paths.
Let \begin{math}{\mathcal Y}_v\end{math} be the set of straight segments with an endpoint at \begin{math}v\end{math} and the other in \begin{math}Y^-(P)\setminus \{v\}\end{math}. Note that if
\begin{math}d:=x^+_1v, h:=vx^-_1,\end{math} and \begin{math}e:=ou\end{math}, then \begin{math}T:=adehb\end{math} is an \begin{math}a-b\end{math} path of \begin{math}D(P)\end{math}. Moreover, since no segment of
\begin{math}{\mathcal Y}_u\cup {\mathcal Y}_v\cup \{d,h,e\}\end{math} is a vertex of \begin{math}D(P\setminus\{u,v\})\end{math},
 then each \begin{math}g\in {\mathcal Y}_u\cup {\mathcal Y}_v\end{math} is disjoint from \begin{math}a\end{math} and \begin{math}b\end{math}, and hence \begin{math}agb\end{math} is an \begin{math}a-b\end{math} path of length 2 of \begin{math}D(P)\end{math}. The
\begin{math}\kappa(n-2)\end{math} \begin{math}a-b\end{math} paths of \begin{math}\TT'\end{math}, together with \begin{math}T\end{math} and the \begin{math}n-5\end{math} paths provided by the elements of \begin{math}{\mathcal Y}_u\cup {\mathcal Y}_v\end{math}, yield an ordered collection
\begin{math}\TT\end{math} of at least  \begin{math}\kappa(n-2)+(n-5)+1\end{math} pairwise internally disjoint \begin{math}a-b\end{math} paths of \begin{math}D(P)\end{math}. Again, from  Eq.~(\ref{Eq:kappa}) it is not hard to see that \begin{math}\kappa(n-2)+n-4\geq \kappa(n)\end{math}, as required.

In summary, we have proved the following lemma.
\begin{lemma}\label{lem:V}
If \begin{math}o\end{math} is a common endpoint of \begin{math}a\end{math} and \begin{math}b\end{math}, and their leaves are in \begin{math}\overline{P}\end{math}, then \begin{math}D(P)\end{math} has an ordered collection \begin{math}\QQ\end{math} of
pairwise internally disjoint \begin{math}a-b\end{math} paths with \begin{math}|\QQ|\geq \kappa(n)\end{math}.
 \end{lemma}
\textsc{ Case 3.} \textbf{Suppose that some leaf of \begin{math}\{a,b\}\end{math} does not belong to \begin{math}\overline{P}\end{math}.}
Our strategy for proving this case is as follows. First we enlarge \begin{math}a\end{math} and \begin{math}b\end{math} in such a way that the resulting objects \begin{math}P', \: a',\end{math} and \begin{math}b'\end{math} lie on some of the previous cases. Once we have the \begin{math}\kappa(n)\end{math} \begin{math}a'-b'\end{math} paths of \begin{math}D(P')\end{math} provided by Lemma~\ref{lem:X} or Lemma~\ref{lem:V},
we proceed to show that there exists a one-to-one mapping between such \begin{math}a'-b'\end{math} paths of \begin{math}D(P')\end{math} and certain subset of \begin{math}a-b\end{math} pairwise internally
disjoint paths of \begin{math}D(P)\end{math}. We formalize these ideas as follows.

 Let \begin{math}L\end{math} be the set of leaves of \begin{math}\{a,b\}\end{math}. Then \begin{math}|L|\in \{2,4\}\end{math}. We recall that \begin{math}o=(0,0)\end{math} is the intersection point between the segments \begin{math}a\end{math} and \begin{math}b\end{math}, and so \begin{math}o\notin L\end{math}.
 As before, by rotating \begin{math}P\end{math} around \begin{math}o\end{math} if necessary, we can assume that \begin{math}L\end{math} has exactly two leaves, say \begin{math}u\in a\end{math} and  \begin{math}v\in b\end{math},
with negative \begin{math}y\end{math}-coordinate. Without loss of generality we assume that \begin{math}u\end{math} has negative \begin{math}x\end{math}-coordinate and that \begin{math}v\end{math} has positive \begin{math}x\end{math}-coordinate, as depicted in Figure~\ref{fig:short}.

For \begin{math}x\in L\end{math}, let \begin{math}\ell_x\end{math} be the ray starting at \begin{math}o\end{math} and passing through \begin{math}x\end{math}. As \begin{math}P\end{math} is finite,
 then there exists a circumference \begin{math}O\subset \reals^2\end{math} centered at the origin \begin{math}o\end{math}, which contains \begin{math}P\end{math} in its interior.
Let us define \begin{math}\gamma(x):=\ell_x\cap O\end{math} for \begin{math}x\in L\end{math},  and \begin{math}\gamma(x):=x\end{math} for \begin{math}x\in P\setminus L\end{math}. Clearly,
we can choose \begin{math}O\end{math} so that the resulting \begin{math}n\end{math} point set \begin{math}P':=\{\gamma(x)~|~x\in P\}\end{math} is in general position. Then \begin{math}\gamma\end{math} is a bijection from \begin{math}P\end{math} to \begin{math}P'\end{math}. Moreover, it is not hard to see that if for \begin{math}xy\in \pp\end{math},
we let \begin{math}\gamma(xy):=\gamma(x)\gamma(y)\end{math}, then this ``extension" of \begin{math}\gamma\end{math} defines a bijection from \begin{math}\pp\end{math} to \begin{math}\pp'\end{math}. If \begin{math}h\end{math} is a vertex (segment) of \begin{math}D(P)\end{math}, then \begin{math}\gamma(h)\end{math} denote its corresponding vertex (segment) in \begin{math}D(P')\end{math}. It follows from the definition of \begin{math}\gamma\end{math} that \begin{math}a':=\gamma(a)\end{math} (respectively, \begin{math}b':=\gamma(b)\end{math}) contains \begin{math}a\end{math} (respectively, \begin{math}b\end{math}) as subsegment, as depicted in Figure~\ref{fig:short}.

Note that all the leaves of \begin{math}\{a', b'\}\end{math} are in \begin{math}\overline{P'}\end{math}. Let \begin{math}\TT'\end{math} be the ordered collection of \begin{math}a'-b'\end{math} pairwise internally
disjoint  paths of \begin{math}D(P')\end{math} constructed following the procedure described in the proof of Lemma~\ref{lem:X}
(respectively, Lemma~\ref{lem:V}) if \begin{math}|L|=4\end{math} (respectively, \begin{math}|L|=2\end{math}). Then \begin{math}\TT'\end{math} has at most one path of length \begin{math}4\end{math}, and the remaining paths
have length \begin{math}2\end{math} or \begin{math}3\end{math}. We also recall that \begin{math}|\TT'|\geq \kappa(n)\end{math}.

Let \begin{math}C'\end{math} be the set of vertices that are in \begin{math}D(P')\setminus D(P)\end{math}, i.e., the set of segments in \begin{math}{\mathcal P}'\end{math} that have at least one endpoint in \begin{math}\gamma(L)\end{math}.
Let \begin{math}T'\end{math} be a path of \begin{math}\TT'\end{math}. We shall use \begin{math}(T')\end{math} to denote the subpath obtained from \begin{math}T'\end{math} after deleting its endvertices (namely \begin{math}a'\end{math} and \begin{math}b'\end{math}).
Note that if \begin{math}(T')\end{math} has no vertices of \begin{math}C'\end{math}, then each vertex in \begin{math}(T')\end{math} is also a vertex of \begin{math}D(P)\end{math}, and so \begin{math}a(T')b\end{math} defines an \begin{math}a-b\end{math} path of \begin{math}D(P)\end{math}, since \begin{math}a\end{math} and \begin{math}b\end{math} are contained
in \begin{math}a'\end{math} and \begin{math}b'\end{math}, respectively. Then \begin{math}{\mathbb T}_0:=\{a(T')b~|~T'\in {\mathbb T}' \mbox{ and } T'\cap C'=\emptyset \}\end{math} is an ordered collection of pairwise internally disjoint \begin{math}a-b\end{math} paths of \begin{math}D(P)\end{math}.
Moreover, note that each path of length \begin{math}2\end{math} of \begin{math}\TT'\end{math} contributes to \begin{math}\TT_0\end{math}.
\vskip 1.55cm
We now focus on the collection \begin{math}\TT'_1\end{math} formed by the paths of \begin{math}\TT'\end{math} that intersect the set \begin{math}C'\end{math}. Thus, if \begin{math}T'\in \TT'_1\end{math}, then \begin{math}T'\cap C'\neq \emptyset\end{math} and \begin{math}T'\end{math} has length \begin{math}3\end{math} or \begin{math}4\end{math}.
Since \begin{math}\TT'\end{math} is ordered, we know that for each path \begin{math}T':=a'f'g'b'\end{math} of length 3 of \begin{math}\TT'\end{math}, there exists a line \begin{math}\ell_{f'g'}\end{math} which passes through \begin{math}o\end{math} and separates
\begin{math}f'\end{math} from \begin{math}g'\end{math}. It is no hard to see that the segments \begin{math}\gamma^{-1}(f')\end{math} and \begin{math}\gamma^{-1}(g')\end{math} in \begin{math}D(P)\end{math} remain separated by \begin{math}\ell_{f'g'}\end{math},
and hence \begin{math}a\gamma^{-1}(f')\gamma^{-1}(g')b\end{math} defines an \begin{math}a-b\end{math} path of \begin{math}D(P)\end{math}. Similarly, note that if \begin{math}\TT'\end{math} contains the path \begin{math}T^*\end{math} of length \begin{math}4\end{math} described in the proof of Lemma~\ref{lem:X} (respectively, Lemma~\ref{lem:V}), then
\begin{math}\gamma^{-1}(T^*)\end{math} defines an \begin{math}a-b\end{math} path of length \begin{math}4\end{math} in \begin{math}D(P)\end{math}. Since \begin{math}\gamma\end{math} is a bijection between \begin{math}\pp\end{math} and \begin{math}\pp'\end{math}, then
the \begin{math}|\TT'_1|\end{math} paths of \begin{math}D(P)\end{math} produced by \begin{math}\gamma^{-1}(\TT'_1)\end{math} are pairwise internally disjoint. These paths together with those in \begin{math}{\mathbb T}_0\end{math}
provides the required \begin{math}|\TT'|\geq  \kappa(n)\end{math} paths of \begin{math}D(P)\end{math}. This concludes the proof of  Case 3, and hence the proof of Theorem~\ref{thm:main}. \hfill \begin{math}\square\end{math}

\begin{figure}
	\centering
	\includegraphics[width=0.7\textwidth]{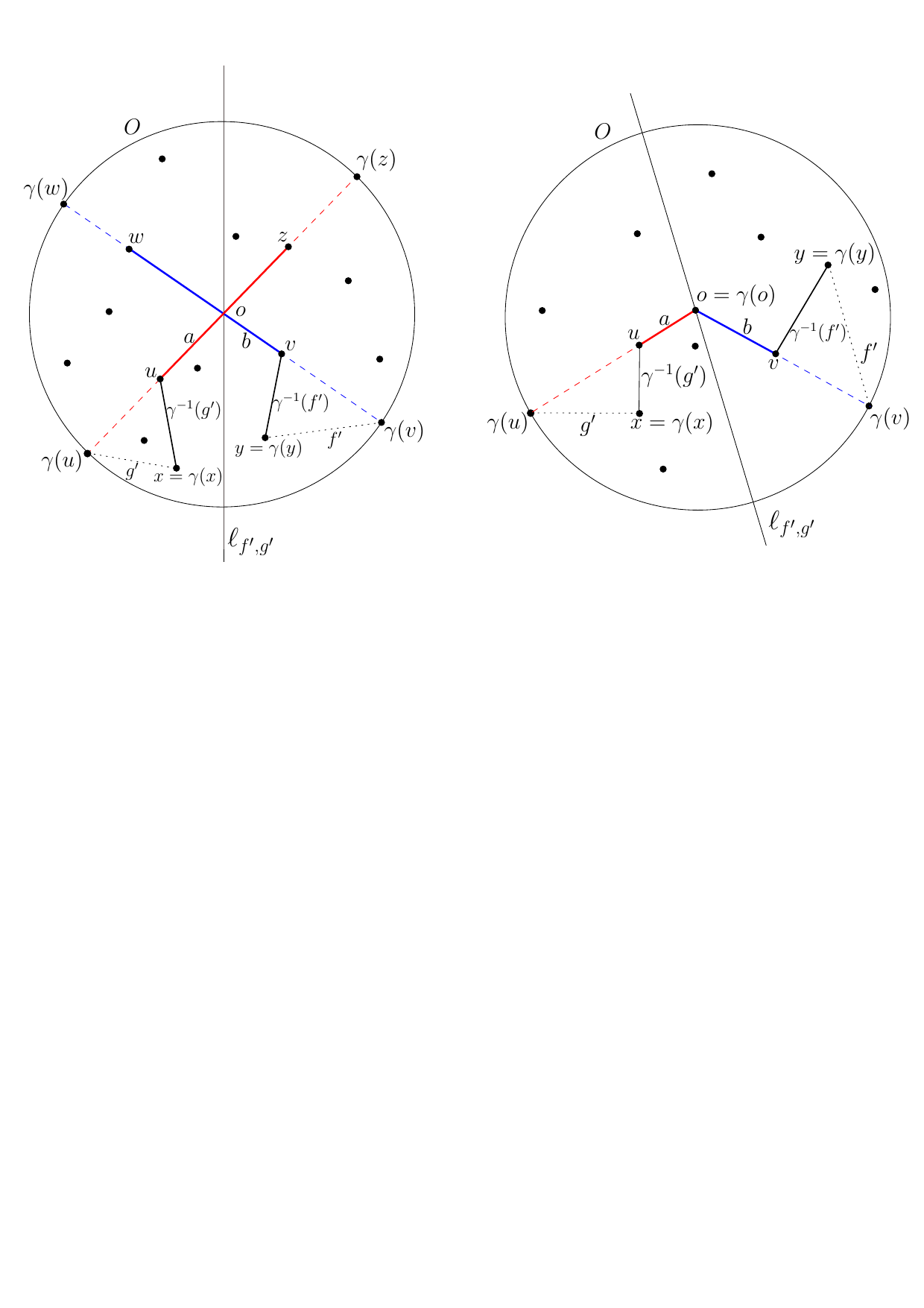}
	\caption{\small Two possibilities for \begin{math}P, \:  a,\end{math} and \begin{math}b\end{math}. In both cases \begin{math}O\end{math} is a circumference centered at \begin{math}o=(0,0)\end{math} and contains all the points of \begin{math}P\end{math}. Note that on the left case \begin{math}o\notin P\end{math}, but on the right we have \begin{math}o\in P\end{math}. In both instances \begin{math}\overline{P}\end{math} does not contain any leaf of
	\begin{math}\{a, b\}\end{math}. The set of leaves \begin{math}L\end{math} of \begin{math}\{a,b\}\end{math} on the left is \begin{math}\{u,v,w,z\}\end{math}, while for the set on the right we have that \begin{math}L=\{u,v\}\end{math}.}
		\label{fig:short}
\end{figure}
\section{Concluding remarks}\label{sec:conclutions}
A trivial upper bound for the connectivity \begin{math}\kappa(H)\end{math} of a graph \begin{math}H\end{math} is its minimum degree \begin{math}\delta(H)\end{math}. As we have observed in Proposition~\ref{prop:easy}, if \begin{math}P\end{math} is a set of \begin{math}n\geq 3\end{math} points in general position in the plane, then its disjointness graph of segments \begin{math}D(P)\end{math} has minimum degree \begin{math}\delta(D(P))\geq \kappa(n):=\binom{\lfloor\frac{n-2}{2}\rfloor}{2}+\binom{\lceil\frac{n-2}{2}\rceil}{2}.\end{math}

From Corollary~\ref{cor:main} it follows that if \begin{math}\delta(D(P))=\kappa(n)\end{math}, then \begin{math}\kappa(D(P))=\delta(D(P))\end{math}, and hence Theorem~\ref{thm:main} is best possible for such point sets. We remark that not only the points in convex position satisfy the hypothesis of Corollary~\ref{cor:main}, but also any point set \begin{math}P\end{math} containing two points in \begin{math}\overline{P}\end{math} such that the line spanned by them separates \begin{math}P\end{math} into two sets of sizes as equal as possible.  We believe that it would be interesting to determine a the exact value of
  \begin{math}\delta(D(P))-\kappa(D(P))\end{math} for the case in which \begin{math}\delta(D(P))>\kappa(n)\end{math}.

Finally, we recall that the basic idea of the proof of Theorem~\ref{thm:main} is to construct, for each pair of nonadjacent vertices \begin{math}a\end{math} and \begin{math}b\end{math} in
\begin{math}D(P)\end{math}, a collection with at least \begin{math}\kappa(n)\end{math} pairwise internally disjoint \begin{math}a-b\end{math} paths. Surprisingly, we were able to form all these collections with
paths of length at most \begin{math}4\end{math}, which implies that the diameter of \begin{math}D(P)\end{math} is between \begin{math}2\end{math} and \begin{math}4\end{math} whenever \begin{math}n\geq 5\end{math}.

\acknowledgements{The authors would like to thank the two anonymous referees for their constructive comments, which helped us to improve the manuscript.}

\nocite{*}
\bibliographystyle{abbrvnat}
\bibliography{sample-dmtcs}

\begin{thebibliography}{15}
\providecommand{\natexlab}[1]{#1}
\providecommand{\url}[1]{\texttt{#1}}
\expandafter\ifx\csname urlstyle\endcsname\relax
  \providecommand{\doi}[1]{doi: #1}\else
  \providecommand{\doi}{doi: \begingroup \urlstyle{rm}\Url}\fi

\bibitem[Albertson and Boutin(2007)]{albertson}
M.~O. Albertson and D.~L. Boutin.
\newblock Using determining sets to distinguish kneser graphs.
\newblock \emph{Electron. J. Combin.}, 2007.
\newblock 14(1): Research paper 20 (Electronic).

\bibitem[Araujo et~al.(2005)Araujo, Dumitrescu, Hurtado, Noy, and
  Urrutia]{gaby}
G.~Araujo, A.~Dumitrescu, F.~Hurtado, M.~Noy, and J.~Urrutia.
\newblock On the chromatic number of some geometric type kneser graphs.
\newblock \emph{Comput. Geom.}, 32(1):\penalty0 59--69, 2005.

\bibitem[B{\'a}r{\'a}ny(1978)]{barany}
I.~B{\'a}r{\'a}ny.
\newblock A short proof of kneser's conjecture.
\newblock \emph{J. Combin. Theory Ser. A}, 25:\penalty0 325--326, 1978.

\bibitem[Chen(2000)]{chen}
Y.-C. Chen.
\newblock Kneser graphs are hamiltonian for \begin{math}n\geq 3k\end{math}.
\newblock \emph{J. Combin. Theory Ser. B}, 80:\penalty0 69--79, 2000.

\bibitem[Ekinci and Gauci(2019)]{baptist}
G.~B. Ekinci and J.~B. Gauci.
\newblock The super-connectivity of kneser graphs.
\newblock \emph{Discuss. Math. Graph Theory}, 39:\penalty0 5--11, 2019.

\bibitem[Fabila-Monroy and Wood(2011)]{ruy-wood}
R.~Fabila-Monroy and D.~R. Wood.
\newblock The chromatic number of the convex segment disjointness graph.
\newblock \emph{Computational geometry}, 7579 of Lecture Notes in Comput.
  Sci.:\penalty0 79--84, 2011.
\newblock Springer, Cham,.

\bibitem[Fabila-Monroy et~al.(2020)Fabila-Monroy, Hidalgo-Toscano, Lea{\~n}os,
  and Lomel\'i-Haro]{lomeli}
R.~Fabila-Monroy, C.~Hidalgo-Toscano, J.~Lea{\~n}os, and M.~Lomel\'i-Haro.
\newblock The chromatic number of the disjointness graph of the double chain.
\newblock \emph{Discrete Mathematics and Theoretical Computer Science}, 22:1,
  2020.

\bibitem[Jonsson(2011)]{jonsson}
J.~Jonsson.
\newblock The exact chromatic number of the convex segment disjointness graph.
\newblock 2011.

\bibitem[Kneser(1956)]{kneser}
M.~Kneser.
\newblock Jahresbericht der deutschen mathematiker-vereinigung.
\newblock 58:\penalty0 27, 1956.
\newblock Aufgabe 360.

\bibitem[Lov{\'a}sz(1978)]{lovasz}
L.~Lov{\'a}sz.
\newblock Kneser's conjecture, chromatic number, and homotopy.
\newblock \emph{J. Combin. Theory Ser. A}, 25:\penalty0 319--324, 1978.

\bibitem[Matous\v{e}k(2003)]{matousek}
J.~Matous\v{e}k.
\newblock \emph{Using the Borsuk-Ulam theorem.}
\newblock Universitext. Springer-Verlag, Berlin, 2003.
\newblock ISBN 3-540-00362-2.
\newblock Lectures on topological methods in combinatorics and geometry,
  Written in cooperation with Anders Bj\"orner and G\"unter M. Ziegler.

\bibitem[Pach and Tomon(2019)]{pach-tomon}
J.~Pach and I.~Tomon.
\newblock On the chromatic number of disjointness graphs of curves.
\newblock 2019.
\newblock In 35th International Symposium on Computational Geometry (SoCG
  2019). Schloss Dagstuhl-Leibniz-Zentrum fuer Informatik.

\bibitem[Pach et~al.(2017)Pach, Tardos, and T\'oth]{pach-tardos-toth}
J.~Pach, G.~Tardos, and G.~T\'oth.
\newblock Disjointness graphs of segments.
\newblock \emph{in: Boris Aronov and Matthew J.Katz (eds.)}, 77 of Leibniz
  International Proceedings in Informatics (LIPIcs), Leibniz-Zentrum f\"{u}r
  Informatik, Dagstuhl:\penalty0 59:1--15, 2017.
\newblock 33th International Symposium on Computational Geometry (SoCG 2017).

\bibitem[Reinfeld(2000)]{reinfeld}
P.~Reinfeld.
\newblock Chromatic polynomials and the spectrum of the kneser graph.
\newblock 2000.
\newblock tech. rep., London School of Economics, 2000. LSE-CDAM-2000-02.

\bibitem[Valencia-Pabon and Vera(2005)]{valencia}
M.~Valencia-Pabon and J.~C. Vera.
\newblock On the diameter of kneser graphs.
\newblock \emph{Discrete Math.}, 305:\penalty0 383--385, 2005.

\end{thebibliography}
\label{sec:biblio}

\end{document}